\documentclass[12pt]{article}
\usepackage{amsmath,amsfonts,amssymb,mathptmx}
\usepackage{smallselmer,amscd,theorem}
\usepackage[all]{xy}
\usepackage{float}
\usepackage{graphicx}

\newtheorem{theorem}{Theorem}[section] 
\newtheorem{corollary}[theorem]{Corollary}
{\theorembodyfont{\rmfamily}   }
\newtheorem{lemma}[theorem]{Lemma}   \newtheorem{proposition}[theorem]{Proposition}

{\theorembodyfont{\rmfamily} \newtheorem{remark}[theorem]{\it Remark}  }
{\theorembodyfont{\rmfamily} \newtheorem{properTy}[theorem]{Property}  }
{\theorembodyfont{\rmfamily}   }

\renewcommand{\baselinestretch}{1.1}

\begin{document}

\pagestyle{smallselmer}
\thispagestyle{empty}
\begin{center}
\Large
Quadratic Twists of Elliptic Curves\\
with Small Selmer Rank\large\\
\rm
\rule{0pt}{1.5\baselineskip}\it
Sungkon Chang
 \end{center}

\begin{quote}\textbf{Abstract}\quad
Let $E/\ratn$ be an elliptic curve with no nontrivial rational $2$-torsion points where $\ratn$ is the rational numbers.
In this paper, we prove that there is a quadratic twist $E_D$ for which the rank of the $2$-Selmer group is less than or equal to $1$.
By the author's earlier result \cite{chang-thesis:2004}, this implies an unconditional distribution result on the size of the $2$-part of the Tate-Shafarevich group of the quadratic twists of $E$.  
By  \cite{monsky:1996}, if we assume the finiteness of the Tate-Shafarevich group of an elliptic curve, our result yields a fairly general distribution result for quadratic twists with Mordell-Weil rank $1$.
\end{quote}

\section{Introduction} \label{sec:intro}

Let $E/\ratn$ be an elliptic curve given by $y^2 = x^3 + ax + b$,  $E_D$, a quadratic twist $D\,y^2=x^3+ax+b$, and $\Sel2(E_D)$, the $2$-Selmer group of $E_D$.
In this paper, we prove
\begin{theorem}\label{maintheorem}\footnote{
			The result is improved recently by Mazur and Rubin \cite{mazur:2009} using Kramer's work \cite{kramer:1981}.}
	Let $E/\ratn$ be an elliptic curve with no nontrivial rational $2$-torsion points.  Then, 
		$$\Sharp\set{ \abs{D}< X : D\text{ square-free, } \dim\Sel2(E_D) \le 1 }
				\gg X/(\log X)^\al$$
				for some $0<\al<1$.
\end{theorem}
This result will follow from \cite[Theorem 1.2]{chang-thesis:2004} once we prove the existence of $D$ such that 
$\SEL(E_D) \le 1$.
Let  $\sha(E_D)[2]$ and $\rank\, E_D(\ratn)$ denote the $2$-part of the Tate-Shafarevich group and the Mordell-Weil rank, respectively.  Since  $\Sel2(E_D)\cong E_D(\ratn)/2E_D(\ratn)\oplus\sha(E_D)[2]$, our theorem implies the obvious distribution results for $\Sharp\sha(E_D)[2]$  and  $\rank\ E_D(\ratn)$.
The distribution result for  $\rank\, E_D(\ratn)=0$ is also obtained in \cite{ono:1998} and \cite{ono:2001} by establishing the non-vanishing of $L$-functions, 
but the one for $\Sharp\sha(E_D)[2]$ seems new when $E$ has no nontrivial rational $2$-torsion points.

\begin{corollary}\label{maincorollary}
Assume the finiteness of the Tate-Shafarevich group of all elliptic curves over $\ratn$.
Let $E/\ratn$ be an elliptic curve with no nontrivial rational $2$-torsion points such that $E$ has multiplicative reduction at $v \nmid 6\infty$.
Then, 
\begin{align*}
\Sharp\set{ \abs{D}< X : D\text{ square-free, } \rank\, E_D(\ratn) = 1\ \text{and}\ 
					\dim\sha(E_D)[2]&=0}\\
				&\gg X/(\log X)^\al\end{align*}
				for some $0<\al<1$ which depends on $E$.
\end{corollary}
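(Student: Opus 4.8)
The plan is to deduce the corollary from Theorem~\ref{maintheorem} by refining the family of twists it produces so that the $2$-Selmer dimension is exactly $1$, and then reading off the rank and the Tate--Shafarevich group from the finiteness hypothesis. First I would record the decomposition. Since $E$ has no nontrivial rational $2$-torsion and the $2$-torsion of $E_D\colon Dy^2=x^3+ax+b$ lies over the same cubic $x^3+ax+b$ (up to scaling the $x$-coordinate by $D$), the twist $E_D$ also has $E_D(\ratn)[2]=0$ for every square-free $D$. Hence the exact sequence $0\to E_D(\ratn)/2E_D(\ratn)\to\Sel2(E_D)\to\sha(E_D)[2]\to 0$ gives $\dim\Sel2(E_D)=\rank\,E_D(\ratn)+\dim\sha(E_D)[2]$. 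Assuming finiteness of the Tate--Shafarevich group, the Cassels--Tate pairing is alternating and nondegenerate, so $\Sharp\sha(E_D)$ is a perfect square and $\dim\sha(E_D)[2]$ is even. Consequently, whenever $\dim\Sel2(E_D)\le 1$, the value $\dim\Sel2(E_D)=1$ forces $\dim\sha(E_D)[2]=0$ and $\rank\,E_D(\ratn)=1$, while $\dim\Sel2(E_D)=0$ forces $\rank\,E_D(\ratn)=0$. Thus the corollary reduces to producing $\gg X/(\log X)^\al$ square-free $D$ with $\abs{D}<X$ and $\dim\Sel2(E_D)=1$.

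Theorem~\ref{maintheorem} already supplies $\gg X/(\log X)^\al$ square-free $D$ with $\dim\Sel2(E_D)\le 1$; since $\dim\Sel2(E_D)\in\{0,1\}$ for these, it suffices to cut out those of odd Selmer parity, as oddness combined with the bound $\le 1$ yields exactly $1$. Here I would invoke Monsky's formula \cite{monsky:1996} for $\dim\Sel2(E_D)\bmod 2$, which expresses the parity as an explicit $\mathbb{F}_2$-valued function (a quadratic form together with a linear term) of the prime factorization of $D$ and the sign of $D$, with coefficients given by local symbols attached to $E$. The hypothesis that $E$ has multiplicative reduction at every $v\nmid 6\infty$ is exactly what makes these local contributions transparent: at such a prime the reduction type of $E_D$ is either multiplicative (when $v\nmid D$) or an additive twist of multiplicative type (when $v\mid D$), so the local root numbers, and hence Monsky's coefficients, are computable and do not vanish identically along the family.

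The family produced in the proof of Theorem~\ref{maintheorem} via \cite[Theorem 1.2]{chang-thesis:2004} consists of square-free $D$ whose prime factors lie in prescribed congruence classes, leaving several primes, and the overall sign, free. I would show that Monsky's parity function is non-constant on this family by exhibiting one elementary move---for instance flipping the sign of $D$, or replacing one free prime by another in a complementary residue class---that changes the parity; the multiplicative-reduction hypothesis guarantees the corresponding local factor is nontrivial, so this move is available. A non-constant $\mathbb{F}_2$-affine function takes the value $1$ on a positive proportion of the admissible configurations, and intersecting with the constraints that preserve $\dim\Sel2(E_D)\le 1$ still leaves $\gg X/(\log X)^{\al}$ values of $D$, now with an exponent $\al$ depending on $E$ through the number of imposed congruences. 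This gives the required supply of $D$ with $\dim\Sel2(E_D)=1$, and hence with $\rank\,E_D(\ratn)=1$ and $\dim\sha(E_D)[2]=0$.

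The hard part is the last step: verifying that Monsky's parity is genuinely non-constant on the specific family of Theorem~\ref{maintheorem}, rather than being accidentally forced to be even by the very congruence conditions that guarantee $\dim\Sel2(E_D)\le 1$. This is a compatibility problem between two independently imposed sets of local conditions, and resolving it is precisely where the multiplicative-reduction hypothesis does its work---it ensures that enough of Monsky's local coefficients are nonzero that at least one free parameter of the construction genuinely controls the parity, decoupling the parity condition from the conditions bounding the Selmer rank.
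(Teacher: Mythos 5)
Your first reduction is sound and matches the paper's: under finiteness the Cassels--Tate pairing is nondegenerate, so $\dim\sha(E_D)[2]$ is even, and since $E_D(\ratn)[2]=0$ the decomposition $\dim\Sel2(E_D)=\rank\,E_D(\ratn)+\dim\sha(E_D)[2]$ shows that $\dim\Sel2(E_D)=1$ forces $\rank\,E_D(\ratn)=1$ and $\sha(E_D)[2]=0$. The gap is in how you propose to manufacture many $D$ with $\dim\Sel2(E_D)$ \emph{exactly} $1$. You want to take the family of Theorem \ref{maintheorem} and argue that Monsky's parity function is non-constant on it, so that a positive subfamily has odd Selmer rank. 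But that family is produced by \cite[Theorem 1.2]{chang-thesis:2004}, which propagates a \emph{single} seed twist into $\gg X/(\log X)^\al$ twists with the \emph{same} Selmer dimension; the parity is therefore constant on each such family, and if the seed has $\dim\Sel2=0$ there is nothing of odd parity to cut out. Your proposal never resolves this (you flag the non-constancy verification as ``the hard part'' and leave it open), and even granting a parity flip somewhere in the family, you give no mechanism preventing the Selmer rank from jumping to $3$ rather than landing at $1$ --- odd parity alone does not combine with a bound that you have not re-established after perturbing $D$.

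The paper closes exactly this gap by working at the level of the single seed twist before invoking \cite[Theorem 1.2]{chang-thesis:2004}. Starting from $E'=E_{D_0}$ with $\SEL(E')=0$ (the case $\SEL=1$ being already done by your first paragraph), it performs one further prime twist $D=r$ (or $D=p$), chosen via Lemma \ref{key-lemma} (or a Frobenius condition when $v$ ramifies in $L$) so that $\legendretwo{D}{v}=-1$ while $\legendretwo{D}{q}=1$ at all other bad primes. The second set of conditions keeps every local coboundary image away from $v$ unchanged, so the subgroup $\curlyC(E',v)$ of classes satisfying all local conditions except at $v$ controls the new Selmer group and gives $\SEL(E'_D)\le n_v-1\le 2$; the first condition, using precisely the hypothesis of multiplicative reduction at $v$, makes $\mathcal{H}_D(-N_{E'})=-1$ and flips the root number, so Monsky's $2$-parity theorem plus finiteness forces $\SEL(E'_D)$ odd, hence $=1$. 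Only then is \cite[Theorem 1.2]{chang-thesis:2004} applied to spread this single twist into $\gg X/(\log X)^\al$ twists with Selmer dimension exactly $1$. The root-number flip and the simultaneous upper bound on the new Selmer rank are the essential content missing from your argument.
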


If we assume the finiteness of the Tate-Shafarevich group,
the parity conjecture for elliptic curves over $\ratn$ can be  
proved using  the $2$-parity conjecture which was proved in \cite{monsky:1996}, and our corollary  follows from this result and the assumption.  The proof is given at the end of Section \ref{sec:proof}.

Our original goal was to (unconditionally) prove the existence of $D$ with $\dim\ \Sel2(E_D)=0$, but we were not able to push our method to obtain such a result.  
In fact, the finiteness of the Tate-Shafarevich group and the nondegeneracy of the Cassels-Tate pairing  imply that 
our method of twisting by large primes will fail to produce a $D$ such that $\SEL(E)=a$ and $\SEL(E_D)=b$ where $a+b=1$.  Our current goal is to remove the finiteness hypothesis in the corollary.

In \cite{heath-brown:1994}, for $E : y^2 = x^3 - x$ and a positive integer $n$, Heath-Brown computes  the proportion of $D$ up to $X$ as $X\to\infty$, for which $\SEL(E_D)=n$,  and in \cite{swinnerton:2008}, Sir Peter Swinnerton-Dyer obtains a similar result for far more general elliptic curves with full $2$-torsion points as the number of prime factors of  $D$ approaches infinity.
In  \cite{yu:2005}, using the $2$-Selmer groups of quadratic twists of  $E$ where $E$ is any elliptic curve with full rational $2$-torsion points, the author proves  conditional and unconditional results on the existence of a positive proportion of $D$ up to $X$ with $\rank E_D(\ratn)=a$ where $a\le 1$, and in \cite{xiong:2008} using a result of \cite{yu:2005}, they compute the average size of $\Sharp\,\sha(\widehat{E_D})[\hat\phi]$ where $\widehat{E_D}$ is the dual $2$-isogeny of $E$ considered in \cite{yu:2005}.   Via the modularity of elliptic curves over $\ratn$ and Kolyvagin's result, Ono and James in \cite{james-ono:1999} obtain results about the distribution of $D$ with the trivial $p$-Selmer group of $E_D$ where $p \ge 3$ and $E$ is a  fairly general elliptic curve.  Our method is rather direct and similar to \cite{swinnerton:2008}.  Using Schaefer's framework \cite{schaefer:1998}, we naturally identify each $\Sel2(E_D)$ with a subgroup of $\thegrouptwo{L}$ where $L$ is an \etale\ algebra over $\ratn$ that does not depend on $D$, and we do a rather explicit computation  in this fixed space $\thegrouptwo{L}$.

\par
In Section \ref{sec:local-cob}, we briefly summarize Schaefer's description of a Selmer group, and sketch the proof of our result.
Section \ref{sec:lemmas} consists of rather technical lemmas which shall be used to choose a prime number with properties required in the proof of our theorem.  The phenomena of the quadratic reciprocity law and the Cebotarev density theorem are the main tools for the proof of the lemmas in that section. In Section \ref{sec:proof}, we prove our theorem and corollary.

\paragraph{Acknowledgement}\quad 
I am indebted to Steve Donnelly for many valuable suggestions, and I would like to thank Sir Peter Swinnerton-Dyer for sharing his work.
I would like to also thank Mark Budden for his comments.
This work is partially supported by the Research and Scholoarship Fund of the Armstrong Atlantic State University.

\section{Computing the $2$-Selmer group}\label{sec:local-cob}

Recall that $E/\ratn$ is given by $y^2=x^3+ax+b$, and does not have nontrivial rational $2$-torsion points.   
Let $z_1$, $z_2$, and $z_3$ be the $x$-coordinates of the $2$-torsion points  in $\Qbar$, i.e., the roots of $x^3+ax+b$. Let  $L=L_E$ be the field extension $\ratn(z_1)$.  For each place $p$, let us denote by $n_p$ the number of places of $\OL$ lying over $p$.
Let $S_E$ be the set of places of $\ratn$ consisting of $\infty$ and $2$, and places of bad reduction of $E/\ratn$. 
By \cite[Proposition 3.4]{schaefer:1998}, we have the following isomorphism:
\begin{equation}
\label{eq:HonE}
\HonE(\ratn, E[2])_{S_E} \cong \ker\big( \normLQ : L({S_E},2) \To \ratn({S_E},2) \big).
\end{equation}
We refer to \cite{schaefer:1998} for the definition of $L(S_E,2)$.
For each finite or infinite place $p$, let $L_p:=L\otimes \Qp$. Then, we also have an isomorphism for the completion $\Qp$:
\begin{equation}\label{eq:local-H1}
\HonE(\Qp, E[2]) \cong \ker\big( \Norm_{L_p/\Qp} : \thegrouptwo{L_p} \To \thegrouptwo{\Qp} \big).
\end{equation}
These isomorphisms are defined with a choice of representatives of $\GQ$-orbits or $\Gal(\overline\Qp/\Qp)$-orbits in $E[2]$.  With certain choices of representatives, restriction maps $\res_p : \HonE(\ratn, E[2]) \to \HonE(\Qp, E[2])$ extend to the natural maps $\thegrouptwo{L} \to \thegrouptwo{L_p}$ which we also denote by $\res_p$ (see \cite[Proposition 2.4]{chang-thesis:2004}).
Since $L_p\cong \prod_{\primeP \mid p } L_\primeP$, later in our calculation, the map: $\thegrouptwo{L} \to \thegrouptwo{L_p}$ shall be interpreted as
\begin{equation}\label{eq:actual-map}
				[\al] \mapsto ( [\al] : \primeP \mid p).
				\end{equation}
The $2$-Selmer group can be described as follows:
\begin{equation}\label{eq:Sel-def}
\SeL(E)\cong \set{ \al \in L({S_E},2) : \normLQ(\al)=1,\ \res_p(\al)\in\Img\delta_p\text{ for all } p \in {S_E}}
\end{equation}
where $\delta_p$ is the coboundary map $\WMtwo[\Qp] \to \thegrouptwo{L_p}$ induced by the map $\HonE(\Qp,E[2]) \to \thegrouptwo{L_p}$ in (\ref{eq:local-H1}).

Let us identify the cohomology group $\HonE(\ratn,E[2])_{S_E}$ with the kernel in (\ref{eq:HonE}).  Let  $q_0=\infty$, and $q_1=2$, and write $S_E:=\set{q_0,q_1,\dots,q_n}$. Denote by $W_{-1}=W_{-1}^E$ the subgroup $\HonE(\ratn,E[2])_{S_E}$ of $L(S_E,2)$. 
For each $k=0,\dots,n$, let $W_{k}$ be the subgroup of $W_{k-1}$ consisting of $\al$ such that $\res_{q_k}(\al) \in \Img \delta_{q_k}$. Then, the $2$-Selmer group is isomorphic to 
$W_n$. 
Throughout the paper, by \lq\lq applying the local condition at $q_k$,\rq\rq\ we shall mean the process of obtaining $W_{k-1}$ from $W_k$.
Note that 
\begin{equation}\label{eq:dim-drop}
 \dim W_n = \dim W_{-1} - \sum_{k=0}^n \big( \dim W_{k-1} - \dim W_{k} \big).
 \end{equation}
Note also that $\res_{q_k}(W_{k-1})$ is a subgroup of $\HonE(\Qp[q_k],E[2])$, and we can write $\res_{q_k}(W_{k-1})=\res_{q_k}(W_{k})\oplus T$ for some subgroup $T$ of $\res_{q_k}(W_{k-1})$.
Since $T$ intersects $\Img\delta_{q_k}$ trivially, it follows that $\dim T + \dim\Img\delta_{q_k} \le \dim\HonE(\Qp[q_k],E[2])$ and, hence,
\begin{equation}\label{eq:local-upperbound}
\dim T =  \dim W_{k-1} - \dim W_{k} \le \dim\HonE(\Qp[q_k],E[2]) - \dim\Img\delta_{q_k}.
\end{equation}
By \cite[Corollary 3.6]{schaefer:1998}, we have the following possiblities for the upper bounds in (\ref{eq:local-upperbound}):
for all odd prime numbers $p$, we have $\dim \Img\delta_p = n_p-1$, and $\dim \HonE(\Qp,E[2])=2(n_p-1)$.
When $p=2$, we have $\dim \Img\delta_2 = n_2$, 
$\dim \thegrouptwo{L_2}=2n_2+3$, and $\dim \thegrouptwo{\ratn_2}=3$. 
By (\ref{eq:local-H1}), examining each case of $n_2$, we find that the description (\ref{eq:local-H1}) implies $\dim\HonE(\Qp[2],E[2]) = 2 n_2$.
Throughout the paper, we let
 $\ep:=0$ if $L$ has a complex embedding, and $\ep:=1$ if not.
When $p=\infty$, we have $\dim \HonE(\real,E[2]) = 2\ep$ and $\dim \delta_\infty = \ep$.
Hence, 
\begin{equation}\label{eq:downstep}
\dim\HonE(\Qp,E[2]) - \dim\Img\delta_p  = \begin{cases} 
					n_p-1, & p\text{ odd};\\
					n_2, & p=2;\\
					\ep, & p=\infty.
				\end{cases}
\end{equation}		

Let $\Cl_{S_E}(L)[2]$ denote the $2$-part of the $S_E$-class group of $L$.  Then, by the generalized Dirichlet unit theorem, 
\begin{align}
\dim W_{-1}
      &=\dim \HonE(\ratn,E[2])_{S_E} \notag\\
      &= (1+\ep)+\big(\sum_{k=1}^n n_{q_k} -1\big) + \dim \Cl_{S_E}(L)[2].\label{eq:dimension-H1}
\end{align}
If the equality in (\ref{eq:local-upperbound}) holds for all $k=0,\dots,n$, then  $\dim W_n = \dim \Cl_{S_E}(L)[2]$ and hence,  
$\SEL(E) = \dim \Cl_{S_E}(L)[2]$.  Our goal in this paper is to find a twist $E_D$ such that $\dim \Cl_{S_{E_D}}(L)[2]=0$ and $\dim W_{k-1}^{E_D} - \dim W_{k}^{E_D}$ (for $E_D$)  is maximized for each $k=0,\dots,n$. 
\par
By \cite[Proposition 3.1]{chang-thesis:2004}, $\dim\HonE(\ratn,E[2])_{S_E}=\dim \HonE(\ratn,E[2])_{S_E'}$ where
$S_E':=\set{ p \in S_E : n_p>1}\cup\set{2,\infty}$. Moreover, $\HonE(\Qp,E[2])=0$ if $p$ is an odd prime number with $n_p=1$.  So, let us redefine $S_E$ to be the set of places consisting of $\infty$, $2$, and places $p$ of bad reduction of $E$ with $n_p>1$.
\par
Now we describe the image of the local coboundary map in $L_p$.
First, we fix an embedding: $\Qbar \to \overline{\ratn}_p$.
Let $p$ be a prime number, and suppose that   $[\Qp(z_1):\Qp]\ge [\Qp(z_i):\Qp]$ for $i=1,\dots,3$. Then, 
  $$L_p \cong \begin{cases}
  				    \Qp\times\Qp\times\Qp, & n_p=3;\\
  				    \Qp(z_1)\times\Qp, & n_p=2;\\
  				    \Qp(z_1), & n_p=1.
  				    \end{cases}$$
Hence, if $p$ is an odd prime number, then  	
\begin{equation}\label{eq:H1-identification}
\HonE(\Qp,E[2]) \cong \begin{cases}
  				    \thegrouptwo{\Qp}\times\thegrouptwo{\Qp}, & n_p=3;\\
  				    \thegrouptwo{\Qp(z_1)}, & n_p=2;\\
  				    0, & n_p=1.
  				    \end{cases}
\end{equation}  				    
In practice, these products are interpreted as in (\ref{eq:actual-map}).
If $p=2$ with $n_p>1$, then the description of $\HonE(\Qp,E[2])$ is the same as above.  To give the description for the case $p=2$ with $n_p=1$, let $F:= \Qp(z_1)$.  Let $U_F$ be the group of unit integers in $F$, and $U_{\Qp}$, the group of unit integers in $\Qp$. Then, $\HonE(\Qp,E[2])$ is isomorphic to $\ker$ of the norm from $\thegrouptwo{U_F}$ to $\thegrouptwo{U_{\Qp}}$.  Since this norm map is surjective, and $\dim \thegrouptwo{U_F} = 4$ and $\dim\thegrouptwo{U_{\Qp}}=2$, we have 
		$\dim \HonE(\Qp,E[2]) = 2$.

If $\HonE(\Qp,E[2])$ is identified as in (\ref{eq:H1-identification}), then		
by \cite[Theorem 2.3]{schaefer:1998}, for each prime number $p$,
the local coboundary map $\WMtwo[\Qp] \to \HonE(\Qp,E[2])$ is given as follows:
$$(x,y) \mapsto ( x-z_i : i=1,\dots,n_p-1)$$
provided that $x\ne z_i$ for $i=1,\dots,n_p-1$.
\begin{lemma}\label{lem:themap}
If $n_p=3$, then under the local coboundary map at $p$, 
	\begin{align*} (z_1,0) &\mapsto ((z_1-z_2)(z_1-z_3), z_1-z_2);\\
			 (z_2,0) &\mapsto (z_2-z_1,(z_2-z_1)(z_2-z_3)).
	\end{align*}
If $n_p=2$ and $z_3\in\Qp$, then $(z_3,0) \mapsto (z_3-z_1)$.
\end{lemma}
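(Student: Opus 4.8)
The plan is to reduce everything to the evaluation formula of the preceding display, the only difficulty being that at a $2$-torsion point $(z_i,0)$ one of the factors $x-z_i$ vanishes, so that formula does not apply directly. I would circumvent this using the defining equation of the curve together with the fact that $\delta_p$ is a homomorphism into a finite, hence discrete, group, and is therefore locally constant on $E(\Qp)$.

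First I would treat the case $n_p=3$, so that $z_1,z_2,z_3\in\Qp$ and $\HonE(\Qp,E[2])\cong\thegrouptwo{\Qp}\times\thegrouptwo{\Qp}$ records the $z_1$- and $z_2$-components. For any point $(x,y)\in E(\Qp)$ with $y\ne 0$, the relation $y^2=(x-z_1)(x-z_2)(x-z_3)$ shows that in $\thegrouptwo{\Qp}$ we have $x-z_1=(x-z_2)(x-z_3)$, since every element of $\thegrouptwo{\Qp}$ is its own inverse. Hence on the locus $x\ne z_1,z_2$ the evaluation formula reads $(x,y)\mapsto\big((x-z_2)(x-z_3),\,x-z_2\big)$, an expression which no longer degenerates as $x\to z_1$. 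By local constancy of $\delta_p$, its value at $(z_1,0)$ agrees with its value at nearby points; and since the square class of $(x-z_2)(x-z_3)$ is constant on a small enough neighborhood of $x=z_1$ (this product staying away from $0$ there), I may simply substitute $x=z_1$ to obtain $(z_1,0)\mapsto\big((z_1-z_2)(z_1-z_3),\,z_1-z_2\big)$. The image of $(z_2,0)$ follows symmetrically, now using $x-z_2=(x-z_1)(x-z_3)$ in $\thegrouptwo{\Qp}$ to resolve the degenerate $z_2$-component.

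For the case $n_p=2$ with $z_3\in\Qp$, the algebra is $L_p\cong\Qp(z_1)\times\Qp$ and the map records only the $\Qp(z_1)$-component $x-z_1$. Since $z_3\ne z_1$, the point $(z_3,0)$ is not degenerate in this component, so the evaluation formula applies without modification and gives $(z_3,0)\mapsto z_3-z_1$ directly.

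The main point requiring care is the passage to the limit in the case $n_p=3$: one must verify that $\delta_p$ is genuinely locally constant and that the square class of $(x-z_2)(x-z_3)$ is truly constant near $x=z_1$, so that substituting $x=z_1$ is legitimate in $\thegrouptwo{\Qp}$ rather than merely a formal manipulation. An alternative avoiding limits altogether would be to compute $\delta_p(z_1,0)$ from its definition, choosing $Q\in E(\overline\Qp)$ with $2Q=(z_1,0)$ and evaluating the cocycle $\sigma\mapsto\sigma(Q)-Q$ through Schaefer's identification; I would fall back on this only if the continuity argument proves awkward to state cleanly. As a consistency check in either approach, the three images should multiply to the trivial class, reflecting $(z_1,0)+(z_2,0)+(z_3,0)=O$, and the full image $\big((z_1-z_2)(z_1-z_3),\,z_1-z_2,\,z_1-z_3\big)$ of $(z_1,0)$ in $\thegrouptwo{L_p}$ should satisfy the norm-one condition, which it does since its product of components is a perfect square.
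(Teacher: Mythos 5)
Your argument is correct, and it is the standard one the author evidently has in mind: the paper's own ``proof'' is simply ``left to the reader,'' and the intended computation is exactly your substitution of $(x-z_2)(x-z_3)$ for the degenerate factor $x-z_1$ via the curve equation $y^2=(x-z_1)(x-z_2)(x-z_3)$, justified either by local constancy of the coboundary map (the squares form an open subgroup of $\ratn_p^*$ and $2E(\Qp)$ is open in $E(\Qp)$, so one only needs to note that $(z_1,0)$ is a limit of points of $E(\Qp)$ with $y\ne 0$) or by the direct cocycle evaluation you mention as a fallback. Your consistency checks (the product of the three components being a square, matching the norm-one condition) are exactly the right sanity tests and the computation passes them.
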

\proof
The proof is left to the reader.\hfill\qed

\par
Let us conclude this section by sketching the proof of our main result.
Recall that $E[2](\ratn)$ is trivial.  Then, the cohomology groups $\HonE(\ratn,E_D[2])$ for all $D$ are naturally identified in $\thegrouptwo{L}$ where the \etale\ algebra $L/\ratn$ is a field extension of degree $3$, and similarly the local cohomology groups  $\HonE(\Qp[q],E_D[2])$ for all $D$ are identified in $\thegrouptwo{L_q}$ where $L_q=L\otimes \Qp[q]$; see  (\ref{diag:twist-diagram}) below. Moreover, we have the natural maps $\thegrouptwo{L} \to \thegrouptwo{L_q}$ which  restrict to the maps $\res_q : \HonE(\ratn,E_D[2]) \to \HonE(\Qp[q],E_D[2])$ for all $D$:
\begin{equation}\label{diag:twist-diagram}
\xymatrix@C=50pt{
		\HonE(\ratn,E_D[2]) \ar[r]\ar[d]_{\res_q} & \thegrouptwo{L} \ar[d]^{\res_q}\\
		\HonE(\Qp[q],E_D[2]) \ar[r] & \thegrouptwo{L_q}.
	}
\end{equation}
\par
For each $D$, let $S_D$ denote the set of places consisting of $\infty$, $2$, and places of bad reduction of $E_D$, and let $S=S_1$.
We find $D$ satisfying that $S \subset S_D$ which implies 
\begin{equation}\label{eq:Hone-inclusion}
\HonE(\ratn,E[2])_S \subset \HonE(\ratn,E_D[2])_{S_D}
\end{equation}
as subgroups of $\thegrouptwo{L}$, and that for each $q\in S$, the local coboundary images for  $E$ and $E_D$ identified in the space $\thegrouptwo{L_q}$ are \textit{equal to each other} (see \cite[Proposition 2.5]{chang-thesis:2004}).
By (\ref{eq:Hone-inclusion}) and the description (\ref{eq:Sel-def}),  what survives in $\HonE(\ratn,E_D[2])_{S_D}$ after applying local conditions over $S$ is a subgroup $W^D$ of $\thegrouptwo{L}$ \textit{containing}  $\Sel2(E)$ because of the second condition we impose above on $D$ (see (\ref{diag:Sel-ED}) in Section \ref{sec:proof}).
At each stage of applying local conditions as illustrated in (\ref{diag:Sel-ED}), the size of the subgroups surviving is decreased by at most the  numbers given in (\ref{eq:downstep}) in Section \ref{sec:local-cob}.
We find such $D$ as a product of two primes $p$ and $r$, where $p$ splits completely and $r$ remains prime in $L$, that 
    		\begin{equation}\label{eq:Hone-E-ED}
    		\dim \HonE(\ratn,E[2])_S +2 =\dim \HonE(\ratn,E_D[2])_{S_D},
    		\end{equation}
(see (\ref{eq:dimension-H1}) in Section \ref{sec:local-cob}).  The role of the prime $r$ shall be explained in the following paragraph.  Putting more conditions on $D$, we shall have, at some intermediate step of applying local conditions over $S$, the size of a subgroup surviving in $\HonE(\ratn,E_D[2])_{S_D}$ decreased by at least one more dimension than in $\HonE(\ratn,E[2])_{S}$; this is served by an element $\al$ contained in $\HonE(\ratn,E_D[2])_{S_D}\minuS\HonE(\ratn,E[2])_{S}$.  Then, it follows from (\ref{eq:Hone-E-ED}) that 
		\begin{equation}\label{eq:theinequality}
		\dim W^D  \le \SEL(E) + 1.
		\end{equation}
To compute $\SEL(E_D)$, we only need to apply the local condition at $p$ to $W^D$ since $\HonE(\Qp[r],E[2])=0$.  With more conditions imposed on $D$, we can map, via $\res_p$ the subgroup $\Sel2(E)$ of $W^D$ outside the local coboundary image, and it follows that  $\dim W^D -2  \le (\SEL(E) + 1)-2$, provided that $\SEL(E)\ge 2$ (see Figure 1 in Section \ref{sec:proof}).  That is, $\SEL(E_D) < \SEL(E)$.  By induction, we prove that there is some $D$ such that $\SEL(E_D)\le 1$. 

\par
It gets very technical to show that there is a $D$ satisfying all the properties we want, but the main tool is the generalized Dirichlet's theorem.  Recall that $\HonE(\ratn,E_D[2]) \to \HonE(\Qp[q],E_D[2])$ extends to the map: $\thegrouptwo{L} \to \thegrouptwo{L_q}$, the essence of which is the Legendre symbol over places of $L$ lying over $q$. 
Let us remark here that the \lq\lq quadratic residue properties\rq\rq\ of the element $\al$, which serves (\ref{eq:theinequality}), over the places $q\in S$ more or less determines the image of $\Sel2(E)$ in $\HonE(\Qp,E[2])$, via a quadratic reciprocity law; see Lemma \ref{lem:reciprocity}---this lemma is proved easily by Hecke's version of quadratic reciprocity for a number field \cite[Theorem 167]{hecke:1981}.
In the previous paragraph, we claimed that $\Sel2(E)$ lands outside the local coboundary image of $E_D$ at $p$, and it turns out that via  quadratic reciprocity this property  pleasantly follows  from the condition imposed on $D$ and hence on $\al$, which serves (\ref{eq:theinequality}).  Recall that $D=pr$.  In this very technical context, the inert prime $r$ serves to keep all local coboundary images of $E$ and $E_D$ over $q\in S$ being equal to each other, and this prime does not contribute to the size of $\HonE(\ratn,E_D[2])_{S_D}$; see (\ref{eq:dimension-H1}).  When $E[2](\ratn)\ne 0$, we do not have this prime, and it posed the main difficulty to extending our proof to the general case. 

\section{Lemmas}\label{sec:lemmas}

In this section, we introduce several lemmas which shall be used in the proof of Theorem \ref{maintheorem}, and 
 a slight generalization of the Legendre symbol.
Let $L$ be a field extension of $\ratn$ with degree $3$, and let $M$ be the Galois closure of $L$ over $\ratn$.  Let $K/\ratn$ denote the quadratic extension in $M$ if $L/\ratn$ is not Galois. 
For an odd prime ideal $\primep$ of $\OM$ and an element $\al$ of $\OM$ relatively prime to $\primep$,
we denote the Legendre symbol by $\legendretwo{\al}{\primep}$, and for an odd element $\beta$ relatively prime to $\al$, denote the Jacobi symbol by $\legendretwo{\al}{\beta}$.
If $v$ is a real embedding of $M$, then $\legendretwo{ \al }{ v }$ is defined to be $1$ if $v(\al)>0$, and $-1$ if not. 

We  define an extension of the Legendre symbol over even primes as follows:
Let $\primep$ be a  prime ideal dividing $2\OM$.  For $\al \in \OM$ coprime to $\primep$, we define the symbol 
$ \legendretwo{ \al }{ \primep }$ to be the class of $\al$ in $\grouptwo{U_\primep}$ where $U_\primep$ is the group of unit integers in the completion of $M$ at $\primep$.
Suppose that $\primep$ and $\primep'$ are prime ideals dividing $2\OM$ and $\mu \primep'=\primep$ for some $\mu \in \Gal(M/\ratn)$. Then there is a canonical isomorphism $\grouptwo{U_{\primep'}} \to \grouptwo{U_\primep}$ such that the following diagram  is commutative:
\begin{equation*}
\xymatrix{
	(\OM/(\primep')^n)^* \ar[r]^\mu\ar[d] & (\OM/(\primep)^n)^* \ar[d]\\
	\grouptwo{U_{\primep'}} \ar[r] & 	\grouptwo{U_\primep}
}
\end{equation*}
where the vertical maps are surjective for sufficiently large $n$, and given by the Legendre symbols.
Let us denote by $\legendretwo{ \al }{\primep'}_\mu$ the image of $\legendretwo{ \al }{\primep'}$ in $\grouptwo{U_\primep}$.
This definition is consistent with the usual Legendre symbol over an odd prime ideal $\primep$ since $\thegrouptwo{U_\primep}\cong \zz/2\zz$, and 
$\legendretwo{\al}{\primep}\legendretwo{\al}{\primep'}_\mu$ can be simply denoted by 
$\legendretwo{\al}{\primep}\legendretwo{\al}{\primep'}$.

Suppose that $L/\ratn$ is not Galois.
If $\primeP$ is an even prime of $L$, and $\primeQ\subset\OM$ is an unramified prime over $\primeP$ with residue degree $f(\primeQ/\primeP)=1$, then
we have a canonical isomorphism $\grouptwo{U_\primeP} \to \grouptwo{U_\primeQ}$.  Under this isomorphism, we may write $\legendretwo{\al}{\primeQ}=\legendretwo{\al}{\primeP}$ for $\al \in L^*$, and this abuse of notation is used in Lemma \ref{lem:action}.

\begin{lemma}\label{key-lemma} 
Let $T$ be the set of $2$ and prime divisors of the square-free part of $\Delta_{K/\ratn}=(z_1-z_2)^2(z_2-z_3)^2(z_3-z_1)^2$.
Let $R=\set{r_i : i=1,\dots,m}$ and $\set{t_i : i=1,\dots,m}$ be  a sequence of prime numbers and a sequence of $\pm 1$, respectively, such that $R$ does not intersect $T$.  Then, there is an odd prime number $p$ such that 
$n_p=1$ and $\legendretwo{p}{r_i}=t_i$ for $i=1,\dots,m$, and such that $\legendretwo{p}{q}=1$ for all $q\in T$. 
\end{lemma}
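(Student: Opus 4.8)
The plan is to realize every condition on $p$ as a constraint on the Frobenius conjugacy class of $p$ in a single number field, and then invoke the Chebotarev density theorem. Let $d$ be the square-free part of $\Delta_{K/\ratn}$, so that in the non-Galois case $K=\ratn(\sqrt d)$ and every prime ramifying in $K$ lies in $T$. For each odd $q$ write $q^{*}=(-1)^{(q-1)/2}q$; then $\legendretwo{p}{q}=1$ is equivalent to $p$ splitting in $\ratn(\sqrt{q^{*}})$, the condition $\legendretwo{p}{r_i}=t_i$ prescribes the splitting type of $p$ in $\ratn(\sqrt{r_i^{*}})$, and $\legendretwo{p}{2}=1$ is the condition $p\equiv 1\pmod 8$, i.e.\ $p$ splits completely in $\ratn(\zeta_8)$. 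Let $A$ be the compositum of $\ratn(\zeta_8)$, the fields $\ratn(\sqrt{r_i^{*}})$ for $i=1,\dots,m$, and the fields $\ratn(\sqrt{q^{*}})$ for odd $q\in T$. Since these quadratic fields ramify at pairwise distinct primes (the $r_i$ are distinct and lie outside $T$, while $\ratn(\zeta_8)$ ramifies only at $2$), they are linearly disjoint, so $\Gal(A/\ratn)$ is the direct product of the corresponding factors. Consequently the prescribed collection of symbols determines a \emph{unique} $\sigma_A\in\Gal(A/\ratn)$, with no internal consistency obstruction.

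Next I would combine the cubic condition with $\sigma_A$. The condition $n_p=1$ says precisely that $\mathrm{Frob}_p$ is a $3$-cycle in $\Gal(M/\ratn)$ (a generator of $C_3$ in the Galois case, a $3$-cycle of $S_3$ otherwise), which is conjugation invariant. If $L/\ratn$ is Galois then $M=L$ has degree $3$ while $A$ has $2$-power degree, so $M\cap A=\ratn$; the cubic and abelian conditions are then completely independent, $\Gal(MA/\ratn)=\Gal(M/\ratn)\times\Gal(A/\ratn)$, and the pair $(\text{$3$-cycle},\sigma_A)$ is realized by some $\sigma$. If $L/\ratn$ is not Galois, then $M\cap A$, being abelian over $\ratn$, is contained in the fixed field $K$ of the commutator subgroup $A_3$, and since $d$ is supported on $T$ we have $K=\ratn(\sqrt d)\subseteq A$, whence $M\cap A=K$. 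Thus $\Gal(MA/\ratn)$ is the fibre product of $\Gal(M/\ratn)$ and $\Gal(A/\ratn)$ over $\Gal(K/\ratn)$, and $(\text{$3$-cycle},\sigma_A)$ lifts to a single $\sigma\in\Gal(MA/\ratn)$ exactly when the two restrictions agree on $K$.

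The one genuine point to verify, and the step I expect to be the main obstacle, is this compatibility on $K$. A $3$-cycle lies in $A_3$, hence restricts to the identity on $K$, so I must check that $\sigma_A$ is likewise trivial on $K$, i.e.\ that the imposed symbols force $p$ to split in $K$. This is a quadratic reciprocity computation: since $p\equiv 1\pmod 8$ yields $\legendretwo{-1}{p}=\legendretwo{2}{p}=1$ and $\legendretwo{q}{p}=\legendretwo{p}{q}$ for every odd $q$, expanding $\legendretwo{d}{p}$ over the prime factors of $d$ (all lying in $T$) and using $\legendretwo{p}{q}=1$ for $q\in T$ gives $\legendretwo{d}{p}=1$, so $p$ splits in $K$. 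Hence $\sigma_A|_K$ is trivial and agrees with the restriction of the $3$-cycle, and the desired $\sigma$ exists. This is precisely the place where Hecke's reciprocity would be invoked if one preferred to argue with symbols over $M$ rather than over $\ratn$.

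Finally, with $\sigma$ in hand its conjugacy class $C$ in $\Gal(MA/\ratn)$ is nonempty, and because both the cycle-type condition and the abelian conditions are conjugation invariant, every prime with $\mathrm{Frob}_p\in C$ satisfies all of the requirements. By the Chebotarev density theorem there are infinitely many such $p$; discarding the finitely many primes ramified in $MA$ together with the primes in $T\cup R$ leaves an odd prime $p$ with $n_p=1$, with $\legendretwo{p}{r_i}=t_i$ for all $i$, and with $\legendretwo{p}{q}=1$ for all $q\in T$, as required.
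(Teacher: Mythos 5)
Your proof is correct and follows essentially the same route as the paper's: both encode the quadratic conditions in a multiquadratic extension, compose it with the Galois closure $M$ of $L$, apply the Cebotarev density theorem to an automorphism that is a $3$-cycle on $M$, and use quadratic reciprocity together with $p\equiv 1 \bmod 8$ to convert between $\legendretwo{q}{p}$ and $\legendretwo{p}{q}$. The only difference is organizational: the paper builds the tower over $K'=K(\sqrt{q},\sqrt{-1}: q\in T)$ and picks a Frobenius trivial on $K'$, so the compatibility on $K=M\cap A$ is automatic, whereas you impose the conditions via the twisted fields $\ratn(\sqrt{q^*})$ and verify that compatibility explicitly by computing $\legendretwo{d}{p}=1$.
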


\begin{proof}
Suppose that $L/\ratn$ is not Galois.
Let $K$ be the quadratic extension in the Galois closure of $L$, and let $K':=K(\sqrt{q},\sqrt{-1} : q \in T)$.
Let $F/K'$ be the field extension $K'(\sqrt{r_i} : i=1,\dots,m)$.
Then, since $R$ intersects $T$ trivially, $\Gal(F/K')=\oplus_{i=1}^m \zz/2\zz$.
Write $T=\set{2,q_1,\dots,q_s}$.  Note that $K$ is $\ratn(\sqrt{\pm d})$ where $d$ is $2\prod_1^s q_i$ or $\prod_1^s q_i$, and that  $LK'/K'$ and $F/K'$ are linearly disjoint and Galois.  Hence, $\Gal(LF/K')\cong\Gal(LK'/K')\oplus\Gal(F/K')$. Let $F_i:=K'(\sqrt{r_i})$.
Then, there is an automorphism $\tau$ in $\Gal(LF/K')$ such that  $\res_{LK'}(\tau)$ is a generator of $\Gal(LK'/K')$ and such that $\res_{F_i}(\tau)\cong\inner{t_i}$ for each $i=1,\dots,m$.
 By the the Cebotarev density theorem, there is a prime number $p\ne 2$  whose Frobenius automorphism in $\Gal(LF/\ratn)$ is $\tau$. 
 Since $K'/\ratn$ is Galois, $p$ splits completely in $K'$.
 By our choice of $\tau$, it follows that  $n_p=1$, and $\legendretwo{r_i}{p}=t_i$ for all $i$. Since $\legendretwo{-1}{p}=1$ (i.e., $p\equiv 1 \mod 4$), we have $t_i=\legendretwo{p}{r_i}$.
 Moreover, $1=\legendretwo{q}{p} =\legendretwo{p}{q}$ for all odd primes $q \in T$.
 If $q=2$, then by the supplementary reciprocity law, 
  $p\equiv 1 \mod 8$ if and only if $\legendretwo{2}{p}=1$ (provided that $p\equiv 1 \mod 4$). 
  Since $2$ is contained in $T$, and $p$ splits completely in $K'$, we have $\legendretwo{2}{p}=1$ and, hence, $\legendretwo{p}{2}=1$.
  The proof of the case that $L/\ratn$ is Galois is similar, and rather simpler.
  \hfill\qed
\end{proof}

\begin{remark}\label{rem:choose}
Let $T$ be a finite set of infinite or finite places of $M$. It is well-known that for any modulus $\idealM$ and a prime power decomposition $\idealM=\prod_{\primeQ\in T} \idealM_\primeQ$, 
\begin{equation}\label{eq:CRT}
\rayclass{M}{\idealM} \cong \bigoplus_{\primeQ\in T} \rayclass{M}{\idealM_\primeQ}
\end{equation}
where $M_\idealn=\set{\al\in M^* \text{ coprime to }\idealn}$, and $M_{\idealn,1}$ denotes the \textit{ray mod $\idealn$} (i.e., $\set{ \al \in M_\idealn : \al \equiv^* 1 \mod \idealn }$).
 If $H$ is the Hilbert class field of $M$, by the class field theory,
$\Gal(R/H) \cong (\rayclass{M}{\idealM}) / \OM^*$, as shown below, where $\OM^*$ is identified with its image in $\rayclass{M}{\idealM}$:
\begin{equation}\label{diag:CFT}
\xymatrix{
	1\ar[r] & \Gal(R/H)\ar[r] & \Gal(R/M)\ar[r] & \Gal(H/M)\ar[r] & 1 \\
	1\ar[r] &   J\ar[r]\ar@{<->}[u] & I_\idealM/P_{\idealM,1}\ar[r]\ar@{<->}[u] 
											& I/P\ar[r] \ar@{<->}[u] & 1 
}
\end{equation}
where $J=(\rayclass{M}{\idealM}) / \OM^*$, $I_\idealM$ is the group of fractional ideals of $\OM$ coprime to $\idealM$, $P_{\idealM,1}$ is the  group of principal fractional ideals $\beta\OM$ such that $\beta \equiv^* 1 \mod \idealM$, and 
$I/P$ is the ideal class group of $\OM$.

\begin{lemma}\label{lem:choose-rho}
Let $M$, $H$, and $\idealM$ be as above.  Then, there is a prime element $\rho$ of $\OM$ lying over a prime number $p$ which splits completely in $M$ such that $\rho$ belongs to any class in 
$\rayclass{M}{\idealM}$.
\end{lemma}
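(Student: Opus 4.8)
The plan is to use the class field theory setup of Remark 3.2 to translate the statement about "$\rho$ lying in a prescribed class of $\rayclass{M}{\idealM}$" into a statement about Frobenius elements in $\Gal(R/M)$, and then apply the Cebotarev density theorem. The subtle point is that I want simultaneously two things: that $\rho$ reduces to a \emph{given} class in the ray class group, and that the underlying rational prime $p$ \emph{splits completely} in $M$. So first I would fix the target class $c \in \rayclass{M}{\idealM}$. Via the isomorphism $J = (\rayclass{M}{\idealM})/\OM^* \cong \Gal(R/H)$ in \eqref{diag:CFT}, the image $\bar c$ of $c$ in $J$ corresponds to an element $\sigma_0 \in \Gal(R/H)$. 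I then want a prime $\rho$ of $\OM$, unramified in $R/M$, whose Frobenius $\mathrm{Frob}_\rho$ equals $\sigma_0$; by the Artin reciprocity isomorphism the Frobenius of an unramified degree-one prime $\rho$ is exactly the image of the ideal class $[\rho] \in I_\idealM/P_{\idealM,1}$, so $\mathrm{Frob}_\rho = \sigma_0$ forces $\rho$ into the class $c$ modulo $\OM^*$.

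The next step is to arrange that the rational prime $p$ below $\rho$ splits completely in $M$. For this I would work in the composite Galois group $\Gal(R/\ratn)$ rather than $\Gal(R/M)$; note $M/\ratn$ is Galois (it is the Galois closure of $L$ when $L/\ratn$ is non-Galois, and $M=L$ otherwise), so $R/\ratn$ is Galois. The condition "$p$ splits completely in $M$" is the condition that the Frobenius conjugacy class of $p$ in $\Gal(R/\ratn)$ lands in the subgroup $\Gal(R/M)$. Since $\sigma_0 \in \Gal(R/H) \subset \Gal(R/M)$, the element $\sigma_0$ already fixes $M$, so a prime $\rho$ of $M$ with $\mathrm{Frob}_{\rho} = \sigma_0$ automatically has its rational prime $p$ split completely in $M$: the residue degree $f(\rho/p)$ equals the order of $\mathrm{Frob}_\rho$ restricted to $M$, which is trivial because $\sigma_0|_M = \mathrm{id}$. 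Thus the single Cebotarev condition $\mathrm{Frob}_\rho = \sigma_0$ delivers both desired properties at once. I would apply the Cebotarev density theorem to the conjugacy class of $\sigma_0$ in $\Gal(R/M)$ (equivalently, choose any prime of $M$ unramified in $R$ with that Frobenius) to produce infinitely many such $\rho$, and in particular one of degree one over $\ratn$.

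The main obstacle, and the step requiring the most care, is keeping the two reciprocity dictionaries consistent: the Artin map identifies an \emph{ideal-class} datum $I_\idealM/P_{\idealM,1}$ with $\Gal(R/M)$, while the element $\sigma_0$ was produced from the \emph{idele-theoretic} ray class quotient $J=(\rayclass{M}{\idealM})/\OM^*$ via the left column of \eqref{diag:CFT}. I must verify that the isomorphism $J \cong \Gal(R/H)$ sends the class of $\rho \bmod \OM^*$ to the Frobenius $\mathrm{Frob}_\rho$, so that prescribing the Frobenius is genuinely equivalent to prescribing the class of $\rho$ in $\rayclass{M}{\idealM}$ up to units. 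This is exactly the commutativity of the vertical correspondences in the diagram of Remark 3.2, and once that compatibility is invoked, the rest is a direct application of Cebotarev. A final minor point is to note that the finitely many primes ramifying in $R/M$ or dividing $\idealM$ can be excluded without affecting the existence of $\rho$.
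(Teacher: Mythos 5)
Your overall strategy (translate the target class into an element of $\Gal(R/H)$ via the diagram of Remark \ref{rem:choose} and then invoke Cebotarev) is the same as the paper's, but the step that is supposed to force $p$ to split completely in $M$ is wrong as stated. You claim that because $\sigma_0\vert_M=\mathrm{id}$, a prime $\rho$ of $M$ with $\mathrm{Frob}_\rho=\sigma_0$ automatically lies over a rational prime splitting completely in $M$, "since $f(\rho/p)$ equals the order of $\mathrm{Frob}_\rho$ restricted to $M$." But $\mathrm{Frob}_\rho$ here is an element of $\Gal(R/M)$, so its restriction to $M$ is the identity for \emph{every} prime $\rho$ of $M$; it carries no information whatsoever about $f(\rho/p)$. (Take $M=\ratn(i)$ and $R=M$: every prime of $M$ has trivial Frobenius in the trivial group $\Gal(R/M)$, yet the prime above $3$ is inert.) The residue degree $f(\rho/p)$ is governed by the Frobenius of $p$ over $\ratn$, not over $M$. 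The paper handles this by passing to the Galois closure $R'$ of $R$ over $\ratn$, lifting $\sigma$ to $\sigma'\in\Gal(R'/\ratn)$, and applying Cebotarev to $\sigma'$ \emph{over $\ratn$}; then $\Frob(\primep/p)=\sigma'\vert_M=\mathrm{id}$ genuinely gives $f(\primep/p)=1$, and the identity $\Frob(\primeP'/p)^{f(\primep/p)}=\Frob(\primeP'/\primep)$ recovers $\mathrm{Frob}_{\primep}=\sigma$ in $\Gal(R/M)$. Your closing remark that one may "in particular" choose $\rho$ of degree one would also repair the argument (degree-$\ge 2$ primes have density zero, and $M/\ratn$ is Galois, so an unramified degree-one prime forces complete splitting), but that observation, not the restriction of $\sigma_0$ to $M$, has to carry the weight, and you present it only as an afterthought.

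A second, smaller gap: as you yourself note, prescribing $\mathrm{Frob}_\rho$ only pins down the class of $\rho$ in $(\rayclass{M}{\idealM})/\OM^*$, whereas the lemma (and its use in (\ref{eq:choose-rho})) requires $\rho$ to lie in a prescribed class of $\rayclass{M}{\idealM}$ itself. You never close this unit ambiguity. The paper does: the Frobenius condition makes $\primep=\rho'\OM$ principal with $\rho'\equiv^*\alpha\lambda \bmod \idealM$ for some $\lambda\in\OM^*$, and one replaces $\rho'$ by $\rho:=\rho'\lambda^{-1}$ to land exactly in the class of $\alpha$. Without this adjustment your $\rho$ may fail the sign and congruence conditions the lemma is later used to impose.
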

\begin{proof}
Let $R$ be the ray class field of $M$ mod $\idealM$, and $R'$ be the Galois closure of $R$ over $\ratn$, so $\Gal(R'/M) \subset \Gal(R'/\ratn)$.
Let $\al$ be a number in $M_\idealM$ belonging to an arbitrary class in $\rayclass{M}{\idealM}$,
and let $\sig$ be an automorphism in $\Gal(R/H)$ corresponding to the class $[\al] \in \rayclass{M}{\idealM}$ via the map in (\ref{diag:CFT}).
Let $\sig'$ be the automorphism in $\Gal(R'/\ratn)$ which restricts to $\sig$.
By the Cebotarev density theorem, there are a prime ideal $\primeP'$ of $\OK[R']$ and a prime number $p$ such that $\Frob(\primeP' / p) = \sig' \in \Gal(R'/H)$.
Let $\primeP:=\primeP' \cap \OK[R]$ be the prime ideal of $\OK[R]$, and $\primep:=\primeP \cap \OM$,
the prime ideal of $\OM$. Then,
$\id = \Frob(\primeP'/p) \mid_M = \Frob(\primep/p)$, and hence, the residue degree $f(\primep/p)=1$.
Thus, $p$ splits completely in $M$.

Note that $\sig' = \Frob(\primeP'/p) = \Frob(\primeP'/p)^{f(\primep/p)} = \Frob(\primeP'/\primep)$, and hence,
$\sig=\Frob(\primeP/\primep)$.
By (\ref{diag:CFT}), the prime ideal $\primep$ is a principal ideal $\rho'\OM$ such that 
		$[\rho']=[\al]$ in $(\rayclass{M}{\idealM}) / \OM^*$.
So, $\rho' \equiv^* \al\lambda \mod \idealM$ for some $\lambda \in \OM^*$.
Let $\rho:=\rho' \lambda\Inv$. Then, $\rho \equiv^* \al \mod \idealM.$\hfill\qed
\end{proof}

Let $T_0:=\set{\primep_1,\dots,\primep_t}$ be the set of all places of $M$ over $2$.
Let $T:=\set{\primeQ_1,\dots,\primeQ_s}$ be a finite set of (finite or infinite) places of $M$ outside $2$, and let $\set{\ep_\primeQ : \primeQ \in T}$ be a sequence of $\pm 1$ indexed over $T$. Let  $\idealM$ be a modulus supported by $T\cup T_0$ with large exponents over places in $T_0$. 
By Lemma \ref{lem:choose-rho}, we can choose a prime element $\rho$ of $\OM$ lying over a prime number $p$ which splits completely in $M$ such that
	\begin{alignat}{2}\label{eq:choose-rho}
	\legendretwo{\rho}{\primeQ} & =\ep_\primeQ &&\quad\text{for all } \primeQ \in T;\\
	\legendretwo{\rho}{\primep}  & =1 &&\quad \text{for all } \primep \in T_0.\notag
	\end{alignat}
	
\end{remark}	
	
\begin{lemma}\label{lem:action}
Suppose that $L/\ratn$ is not Galois.
Let $\sig$ be an automorphism in $\Gal(M/\ratn)$ with order $3$.
Let $\tau$ be the generator of $\Gal(M/L)$.
Let $\rhoone$ be a  prime element of $\OM$ over an odd prime $p$ which splits completely in $M$, and $\rho_2:=\sig \rhoone$ and $\rho_3:=\sig^2\rhoone$.  Let $\al_i:=\Norm_{M/L}(\rho_i)=\rho_i\cdot\tau\rho_i$.
 Let $\primeQ_1$ be a  prime ideal of $\OM$ over an even or odd prime $q$ not equal to $p$.
 Suppose that $q$ splits completely in $M$, and let $\primeQ_2:=\sig \primeQ_1$ and $\primeQ_3:=\sig^2\primeQ_1$.  Let $\primeP_i\OM:=\primeQ_i\cdot \tau\primeQ_i$ for some prime ideal $\primeP_i$ of $\OL$.
 \par
  Then,  $\normLQ(\al_1\al_2)$ and $\normLQ(\al_2\al_3)$ are contained in $(\ratn^*)^2$, and 
 \begin{align}
 \albe11 &= \rhoQtwo11{}{\tau}, &  \albe21 &= \rhoQtwo32\sig{\tau\sig},\notag\\
 \albe12 &= \rhoQtwo22{}{\tau}, &  \albe22 &= \rhoQtwo13{\sig}{\tau\sig},\notag\\
 \albe13 &= \rhoQtwo33{}{\tau}, &  \albe23 &= \rhoQtwo21{\sig}{\tau\sig},\notag
 \end{align}
 \begin{align*}
 \albe31 &= \albetwo23{\sig} \albetwo11{} \albetwo22{\sig^2},\\
 \albe32 &= \albetwo21{\sig} \albetwo12{} \albetwo23{\sig^2},\\
  \albe33 &= \albetwo11{\sig} \albetwo23{} \albetwo12{\sig^2}.
 \end{align*}
 Suppose that $q$ has $n_q=2$, i.e., $q\OL = \primeP_1^2 \primeP_2$ or $q\OL = \primeP_1\primeP_2$ with $f(\primeP_1/q)=2$.
 Hence, $\primeP_1\OM = \primeQ_1\cdot \tau \primeQ_1
 $ and $\primeP_2\OM=\primeQ_2^2$ or $\primeP_2\OM=\primeQ_2$.
 Then, 
 \begin{equation*}
\begin{aligned}
  \albe11 &= \rhoQtwo11{}{\tau} \\
  \albe21 &= \legendretwo{\rhoone}{\primeQ_3}_{\sig}\legendretwo{\rhoone}{\primeQ_1},
 \end{aligned}\ \text{ or }\ %
 \begin{aligned}
   \albe11 &= \rhoQtwo11{}{\tau} \\
   \albe21 &= \legendretwo{\rhoone}{\primeQ_3}_{\sig}\legendretwo{\rhoone}{\primeQ_2},
 \end{aligned}
 \end{equation*}
 if $\sig \primeQ_1 = \tau\primeQ_1$ or $\sig \primeQ_1 = \primeQ_2$, respectively.
 \end{lemma}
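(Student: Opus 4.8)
The plan is to reduce every symbol $\legendretwo{\al_i}{\primeP_j}$ to a product of (transported) Legendre symbols of the single element $\rho_1$, evaluated at the six primes of $\OM$ lying over $q$, and then to read off the tabulated formulas by bookkeeping the $S_3$-action. I would use three facts repeatedly: multiplicativity of the symbol in its top argument; the canonical isomorphism $\grouptwo{U_{\primeP_j}}\cong\grouptwo{U_{\primeQ_j}}$ attached to the residue-degree-one prime $\primeQ_j$ over $\primeP_j$ (established just before the lemma), which lets me evaluate a symbol at a prime of $\OL$ by evaluating at a chosen prime of $\OM$ above it; and Galois equivariance, which says that the image of $\legendretwo{\rho_1}{\primep'}$ under the $\mu$-induced isomorphism is $\legendretwo{\mu\rho_1}{\mu\primep'}$, so that $\legendretwo{\sig^a\rho_1}{\primeQ_j}=\legendretwo{\rho_1}{\sig^{-a}\primeQ_j}_{\sig^a}$ and similarly for $\tau\sig^a$.

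First I would dispose of the norm claim. By transitivity of the norm, $\normLQ(\al_i)=\normLQ\big(\Norm_{M/L}(\rho_i)\big)=\Norm_{M/\ratn}(\rho_i)$, and since the full norm is $\Gal(M/\ratn)$-invariant and $\rho_i=\sig^{i-1}\rho_1$, this equals the fixed rational number $N:=\Norm_{M/\ratn}(\rho_1)$ for every $i$; hence $\normLQ(\al_1\al_2)=\normLQ(\al_2\al_3)=N^2\in(\ratn^*)^2$ (indeed all three pairwise products are squares). For the six symbols with $\al_1$ and $\al_2$ I would then proceed uniformly. Since $q$ splits completely in $M$, each $\primeP_j$ has the degree-one prime $\primeQ_j$ over it, so the canonical isomorphism gives $\legendretwo{\al_i}{\primeP_j}=\legendretwo{\al_i}{\primeQ_j}$; writing $\al_i=\rho_i\cdot\tau\rho_i$ and expanding yields $\legendretwo{\rho_i}{\primeQ_j}\legendretwo{\tau\rho_i}{\primeQ_j}$. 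Each factor is turned into a symbol of $\rho_1$ by equivariance, pushing $\sig^a$ and $\tau\sig^a$ onto the prime and reindexing via $\sig\primeQ_1=\primeQ_2$, $\sig^2\primeQ_1=\primeQ_3$ together with the $S_3$ relation $\tau\sig=\sig^2\tau$ (equivalently $\sig\tau=\tau\sig^2$) to decide which of $\primeQ_1,\primeQ_2,\primeQ_3$ or $\tau\primeQ_1,\tau\primeQ_2,\tau\primeQ_3$ occurs. This produces exactly the listed pairs of transported symbols.

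For the three symbols involving $\al_3$ I would recycle the $\al_1,\al_2$ computations rather than repeat the expansion: since $\sig$ permutes the six primes over $q$, applying the $\sig$- and $\sig^2$-induced isomorphisms to the already-computed symbols and using $\al_3=\sig^2\rho_1\cdot\tau\sig^2\rho_1$ expresses each $\legendretwo{\al_3}{\primeP_j}$ as the advertised product of transports of the $\al_1,\al_2$ symbols, with the relation $\al_1\al_2\al_3=\Norm_{M/\ratn}(\rho_1)\in\ratn$ as a consistency check. For $n_q=2$ I would redraw the splitting picture in $M$: now $\primeP_1\OM=\primeQ_1\cdot\tau\primeQ_1$ while $\primeP_2\OM=\primeQ_2^2$ or $\primeQ_2$, so two of the $\sig$-translates collapse, and whether $\sig\primeQ_1=\tau\primeQ_1$ or $\sig\primeQ_1=\primeQ_2$ dictates which identification of conjugates to feed into the same expansion, yielding the two displayed alternatives. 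I expect the bookkeeping of the $S_3$-action to be the crux; the genuinely delicate point is that $q$ may be even, so the symbols are not merely $\pm1$-valued and the transport subscripts $(\cdot)_\mu$ are not cosmetic—one must check that equivariance is compatible with the isomorphisms $\grouptwo{U_{\primep'}}\to\grouptwo{U_\primep}$ exactly as defined, and that the $n_q=2$ collapse of primes is tracked consistently through them. Everything else is routine multiplicativity once the indexing is pinned down.
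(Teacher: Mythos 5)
Your proposal is correct and follows essentially the same route as the paper, whose own proof computes $\normLQ(\al_1\al_2)$ as $\bigl(\prod_{\mu\in\Gal(M/\ratn)}\mu(\rho_1)\bigr)^2=p^2$, verifies only the single case $\albe11=\rhoQtwo11{}{\tau}$ by multiplicativity and equivariance, and leaves the remaining entries to the reader. Your additional details --- the transitivity-of-norms phrasing, the systematic $S_3$ bookkeeping via $\tau\sig=\sig^2\tau$, recycling the $\al_1,\al_2$ rows through $\sig$- and $\sig^2$-transports to get the $\al_3$ rows, and the care with the non-cosmetic transport subscripts at even $q$ --- are exactly the omitted verifications and check out.
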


 \begin{proof}
Note that
		\begin{align*}
		\normLQ(\al_1\al_2)  &=\prod_{i=1}^3 \sig^i (\al_1\al_2)
						=\prod_{i=1}^3\sig^i(\rho_1\cdot\tau\rho_1)\ %
							\prod_{i=1}^3\sig^i(\sig\rho_1\cdot\tau\sig\rho_1)\\
						&=\left( \prod_{\mu\in\Gal(M/\ratn)} \mu(\rho_1)\ \right)^2=p^2.
		\end{align*} 
 Simiarly, $\normLQ(\al_2\al_3)$ is a square in $\ratn^*$.
 
 Since $\albe11=\legendretwo{ \al_1 }{ \primeQ_1}$, it follows that 
 	$$\albe11 = \legendretwo{ \rhoone }{ \primeQ_1}\legendretwo{ \tau\rhoone }{ \primeQ_1}
 	=\rhoQtwo11{}{\tau}.$$
As the proof of the other results is very similar, we leave the details to the reader.\hfill\qed
\end{proof}

The relationship between the Legendre symbols for the case of $L/\ratn$ being Galois is simpler, and we state it below without proof.
\begin{lemma}\label{lem:Galois}
Suppose that $L/\ratn$ is Galois, i.e., $M=L$.
Let $\sig$ be a nontrivial automorphism of (order $3$) in $\Gal(L/\ratn)$.
Let $\al_1$ be a prime element of $\OL$ over an odd prime $p$ which splits completely in $L$, and $\al_2:=\sig \al_1$ and $\al_3:=\sig^2\al_1$.  
 Let $\primeP_1$ be a  prime ideal of $\OL$ over an even or odd prime $q$ not equal to $p$.
 Suppose that $q$ splits completely in $L$, and let $\primeP_2:=\sig \primeP_1$ and $\primeP_3:=\sig^2\primeP_1$.  Then,
 $\normLQ(\al_1\al_2)$ and $\normLQ(\al_2\al_3)$ are contained in $(\ratn^*)^2$, and 
\begin{alignat*}{2}
 \albetwo11{} & =\albetwo22{\sig^2} & &  =\albetwo33{\sig};\\
 \albetwo12{} & =\albetwo23{\sig^2} & &=\albetwo31{\sig};\\
 \albetwo13{} & =\albetwo21{\sig^2} & &=\albetwo32{\sig}.
 \end{alignat*}
 \end{lemma}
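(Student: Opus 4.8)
The plan is to read this statement as the $M=L$ specialization of Lemma~\ref{lem:action}, in which $\Norm_{M/L}$ collapses to the identity and $\al_i=\sig^{i-1}\al_1$, $\primeP_i=\sig^{i-1}\primeP_1$ are simply the $\Gal(L/\ratn)$-conjugates of $\al_1$ and $\primeP_1$; the whole lemma then follows from the $\Gal(L/\ratn)$-equivariance of the norm and of the (transported) Legendre symbol. For the first assertion I would copy the computation in the proof of Lemma~\ref{lem:action}: since $\normLQ=\prod_{i=0}^{2}\sig^i$ is invariant under $\Gal(L/\ratn)$ and $\al_2=\sig\al_1$, we have $\normLQ(\al_2)=\normLQ(\al_1)$, so
\[
\normLQ(\al_1\al_2)=\normLQ(\al_1)\,\normLQ(\al_2)=\normLQ(\al_1)^2\in(\ratn^*)^2 .
\]
Because $p$ splits completely in $L$, the principal prime $\al_1\OL$ has residue degree $1$ over $p$, whence $\normLQ(\al_1)=\pm p$ and $\normLQ(\al_1\al_2)=p^2$; the identical computation gives $\normLQ(\al_2\al_3)=p^2$.

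For the symbol relations the single ingredient I would isolate is the defining compatibility of the transported symbol introduced before Lemma~\ref{lem:action}: for $\mu\in\Gal(L/\ratn)$ and a prime $\primeP$ coprime to $\al$, the canonical isomorphism $\grouptwo{U_\primeP}\to\grouptwo{U_{\mu\primeP}}$ (the commutative diagram whose vertical arrows are the Legendre maps) carries the class of $\al$ to the class of $\mu\al$, that is
\[
\legendretwo{\al}{\primeP}_\mu=\legendretwo{\mu\al}{\mu\primeP}
\]
in $\grouptwo{U_{\mu\primeP}}$. Granting this, each equality is a one-line check using $\sig^3=\id$. For the first row, $\legendretwo{\al_2}{\primeP_2}_{\sig^2}=\legendretwo{\sig^2\al_2}{\sig^2\primeP_2}=\legendretwo{\al_1}{\primeP_1}$ because $\sig^2\al_2=\sig^3\al_1=\al_1$ and $\sig^2\primeP_2=\sig^3\primeP_1=\primeP_1$, and likewise $\legendretwo{\al_3}{\primeP_3}_{\sig}=\legendretwo{\al_1}{\primeP_1}$. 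The second and third rows are the same computation with $\primeP_1$ replaced by $\primeP_2=\sig\primeP_1$ and $\primeP_3=\sig^2\primeP_1$, each time using $\sig^4=\sig$; for example $\legendretwo{\al_2}{\primeP_3}_{\sig^2}=\legendretwo{\al_1}{\sig\primeP_1}=\legendretwo{\al_1}{\primeP_2}$ and $\legendretwo{\al_3}{\primeP_1}_{\sig}=\legendretwo{\al_1}{\primeP_2}$.

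I expect the only genuine subtlety to be the even-prime bookkeeping. When $q=2$ the symbols take values in the groups $\grouptwo{U_{\primeP_i}}$ rather than in $\{\pm1\}$, so each claimed equality must be read in the correct target group after transport, and one must verify that the isomorphism appearing is exactly the one induced by the relevant power of $\sig$. This is where the hypothesis that $q$ splits completely is used: it forces $\sig$ to permute the primes $\primeP_1,\primeP_2,\primeP_3$ over $q$ cyclically, so the transports are literally induced by powers of $\sig$ and the displayed equivariance applies verbatim. For odd $q$ the subscripts are immaterial, every symbol lies in $\zz/2\zz$, and the identities collapse to the standard equivariance of the Legendre symbol under the residue-field isomorphism $\OL/\primeP\to\OL/\sig\primeP$ induced by $\sig$.
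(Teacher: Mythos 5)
Your proof is correct, and it fills in an argument the paper deliberately omits (the paper states Lemma~\ref{lem:Galois} ``without proof,'' remarking only that the Galois case is simpler than Lemma~\ref{lem:action}). Your route is exactly the intended one: the norm claim is the $M=L$ specialization of the computation in the proof of Lemma~\ref{lem:action}, and the symbol identities follow from the commutative diagram defining $\legendretwo{\al}{\primeP}_\mu$, which gives precisely your equivariance $\legendretwo{\al}{\primeP}_\mu=\legendretwo{\mu\al}{\mu\primeP}$, together with $\sig^3=\id$; your attention to the even-prime case, where the target groups $\grouptwo{U_{\primeP_i}}$ are larger than $\{\pm1\}$ and the transports must be tracked, is the only real point of care and you handle it correctly.
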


\begin{remark}\label{rem:free-var}
If $L/\ratn$ is not Galois, the symbols $\legendretwo{\rho_1}{\primeQ_i}$  and $\legendretwo{\rho_1}{\tau\primeQ_i}$ in Lemma \ref{lem:action} can be treated as free variables as 
explained in (\ref{eq:choose-rho}).  
Then, the matrix over $\zz/2\zz$ associated with the system of the first six equations in Lemma \ref{lem:action} has rank $5$, and $\prod_{i=1}^2\prod_{j=1}^3\legendretwo{\al_i}{\primeP_j} =1$ is the (only) constraint. 
Thus, the five variables $\legendretwo{\al_i}{\primeP_j}$ for $i<2$ or $j<3$
can be treated as free variables.

If $L/\ratn$ is Galois, by Lemma \ref{lem:Galois}, $\albe1j$ for $j=1,2,3$ can be treated as free variables.
\par
A similar result is available for real places of $L$ if there are three real embeddings of $L$.   Suppose that $L$ has three real embeddings, and that $\Gal(M/\ratn)=\inner{\sig,\tau}$ where $\sig$ has order $3$, and $\tau$ has order $2$.  Then, $M$ must have six real embeddings, and we let $v$ be a real embedding of $M$.
All six embeddings are in the form of $v\mu$ for some $\mu \in \Gal(M/\ratn)$.  It is clear that $\legendretwo{ x }{ v\mu }=\legendretwo{ \mu(x) }{ v }$ for all $\mu \in \Gal(M/\ratn)$, 
and if $u$ is a real embedding of $M$ and $u'$ is the restriction of $u$ to $L$, then $\legendretwo{ x }{ u }=\legendretwo{ x }{ u' }$ for all $x \in L$.  It follows that
$$\legendretwo{ \al_1\al_2 }{ v }=\rhoQv{}\rhoQv{\tau}\rhoQv{\sig}\rhoQv{\tau\sig}.$$
Likewise, $\legendretwo{ \al_i\al_j }{ u }$ where $u$ is a real embedding of $M$ are written in terms of the Legendre symbols of $\rho_1$ over real embeddings, and we can obtain, by imposing values on $\legendretwo{\rho_1}{u}$'s,  the desired values of $\legendretwo{ \al_i\al_j }{ u }$ which are required for  the proof of our theorem.
The case: $M=L$ (with $3$ real embeddings) is simpler.
When there is only one real embedding of $L$, the restriction map $\HonE(\ratn,E[2]) \to \HonE(\real,E[2])$ is trivial as $\HonE(\real,E[2])$ is trivial by (\ref{eq:local-H1}).  
\end{remark}
	
\begin{lemma}\label{lem:reciprocity}\ (Reciprocity Law) 
\  Let $M/\ratn$ be a Galois extension.
Let $x$ and $y$ be elements of $\OM$ such that $x\OM = \primeQ_1\cdots\primeQ_s \idealA^2$ and  
	$y\OM = \primeQ_1'\cdots\primeQ_t' \idealB^2$ where $\primeQ_i$ and $\primeQ_j'$ are primes, and such that $xy\OM$ is not a square.
Let $\idealM_x$ be the product of $\primeQ_i$ which are odd, for $i=1,\dots,s$, and 	
$\idealM_y$, the product of $\primeQ_i'$,  which are odd, for $i=1,\dots,t$.
If $\rho$ is a totally positive prime element $\rho \in \OM$, coprime to $xy$, lying above an odd prime number $p$ splitting completely in $M$ such that 
$\rho \equiv^* 1 \mod 8\infty\OM$, then
	\begin{align*}
	\legendretwo{ x }{ \rho } &=  \prod_{\primeQ \mid \idealM_x} \legendretwo{\rho}{\primeQ};\\
	\legendretwo{ y }{ \rho } &= \prod_{\primeQ \mid \idealM_y} \legendretwo{\rho}{\primeQ}.
	\end{align*}
\end{lemma}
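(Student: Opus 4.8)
The plan is to deduce both identities from Hecke's general quadratic reciprocity law \cite[Theorem 167]{hecke:1981}, applied in the convenient guise of the product formula for the quadratic Hilbert symbol over $M$. By symmetry the two formulas are proved in the same way, so I would treat only $\legendretwo{x}{\rho}$. For each place $v$ of $M$ write $(x,\rho)_v \in \{\pm1\}$ for the local quadratic Hilbert symbol; the product formula asserts $\prod_v (x,\rho)_v = 1$, and the entire argument consists of identifying which local factors are nontrivial and matching them against the two sides of the claimed equality. The hypotheses coprimality of $\rho$ with $xy$, total positivity, $\rho \equiv^* 1 \bmod 8\OM$, and $p$ splitting completely are exactly what is needed to trivialize every correction term.

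First I would record the ``good'' places and show their factors are trivial. At each real place $v$, total positivity of $\rho$ makes $\rho$ a square in $M_v^\times$, so $(x,\rho)_v=1$, and complex places are automatically trivial. At each prime $\primep \mid 2$ the congruence $\rho \equiv^* 1 \bmod 8\OM$ forces $\rho$ to be a local square: with $e=v_\primep(2)$ one has $v_\primep(8)=3e\ge 2e+1$, and a principal unit in $1+\primep^{2e+1}$ is a square in $M_\primep^\times$; hence $(x,\rho)_\primep=1$. This same observation disposes of the even primes among the $\primeQ_i$ dividing $x$ (those excluded from $\idealM_x$): since $\rho$ is a local square at every dyadic place, such primes contribute $1$ regardless of their valuation in $x$. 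Finally, at an odd prime dividing neither $x$ nor $\rho$, both arguments are units and the tame symbol is trivial.

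It then remains to evaluate the surviving factors. At the prime $\rho$ itself, $x$ is a unit (as $\rho$ is coprime to $x$) and $\rho$ is a uniformizer, so the tame symbol gives $(x,\rho)_\rho=\legendretwo{x}{\rho}$; here $p$ splitting completely guarantees $\OM/\rho\cong\mathbb{F}_p$ with $p$ odd, so this is the ordinary Legendre symbol. At an odd prime $\primeQ\mid x$, where $\rho$ is a unit, the tame formula gives $(x,\rho)_\primeQ=\legendretwo{\rho}{\primeQ}^{v_\primeQ(x)}$. Substituting into the product formula yields
\[
\legendretwo{x}{\rho}\ \prod_{\primeQ \mid x,\ \primeQ\text{ odd}} \legendretwo{\rho}{\primeQ}^{v_\primeQ(x)} = 1.
\]
In the factorization $x\OM=\primeQ_1\cdots\primeQ_s\idealA^2$, the primes of even valuation (those absorbed into $\idealA^2$) contribute a square and hence $1$, while the odd $\primeQ_i$ occurring to the first power are precisely the divisors of $\idealM_x$; working in $\{\pm1\}$ this gives $\legendretwo{x}{\rho}=\prod_{\primeQ\mid\idealM_x}\legendretwo{\rho}{\primeQ}$. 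The second identity follows verbatim with $y$ and $\idealM_y$ in place of $x$ and $\idealM_x$.

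The main obstacle I anticipate is purely local bookkeeping, not any global difficulty: one must check that $\rho\equiv^* 1\bmod 8\OM$ is exactly strong enough to make $\rho$ a local square at every dyadic prime uniformly in the ramification index (the comparison $3e\ge 2e+1$), and one must align Hecke's normalization of the reciprocity correction factors at $2$ and $\infty$ with the local Hilbert symbols used above. Everything away from $2\infty$ is the elementary tame computation and causes no trouble; the content of the lemma is precisely that the stated hypotheses annihilate all the dyadic and archimedean correction terms, leaving the clean reciprocal product.
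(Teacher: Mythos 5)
Your proposal is correct and follows essentially the same route as the paper: the paper's proof is a direct appeal to Hecke's quadratic reciprocity \cite[Theorem 167]{hecke:1981}, which is exactly the Hilbert-symbol product formula you use, with total positivity and $\rho\equiv^* 1 \bmod 8\OM$ serving to annihilate the archimedean and dyadic correction factors. You merely make explicit the local (tame and dyadic) computations that the paper leaves to the citation.
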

\begin{proof}
Let $\idealM$ be the lcm of $\idealM_x$ and $\idealM_y$, and  $\al$ be any integer in $M$ coprime to $\idealM$ such that 
$$\al \equiv^* 1 \mod 8\infty\OM.$$
Then, $\sgn(\sig\, \al) = 1$ for all real embeddings $\sig : M \to \real$, and 
by \cite[Theorem 167]{hecke:1981}, we have 
		$$\legendretwo{x}{\al} = \legendretwo{\al}{\idealM_x},\quad
			\legendretwo{y}{\al} = \legendretwo{\al}{\idealM_y}.$$
\hfill\qed
\end{proof}

Note that $L\otimes \Qp[2]$ is a product of local fields.
Let $\HonE(\Qp[2],E[2])_0$ denote the subgroup generated by tuples in $L\otimes \Qp[2]$ whose entries are unit integers of the local fields.
\begin{proposition}
There are totally positive elements $\beta_1,\dots,\beta_t$ in $L^*$ lying over primes $p$ with $n_p=3$,  representing elements in $\HonE(\ratn,E[2])$
such that $\inner{\beta_1,\dots,\beta_t}$ surjects down to $\HonE(\Qp[2],E[2])_0$ under the restriction map.
\end{proposition}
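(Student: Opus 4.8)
The plan is to realize a spanning set of $\HonE(\Qp[2],E[2])_0$ by restrictions of explicit global elements built from completely split primes, using Lemma~\ref{lem:choose-rho} to prescribe the relevant Legendre symbols at the even places. First I would record the target. Writing $L\otimes\Qp[2]\cong\prod_{\primeP\mid 2}L_\primeP$, the group $\HonE(\Qp[2],E[2])_0$ is the intersection of $\ker\big(\Norm_{L\otimes\Qp[2]/\Qp[2]}\big)$ with the unit subgroup $\prod_{\primeP\mid 2}\thegrouptwo{U_\primeP}$; this is a finite $\zz/2\zz$-vector space of some dimension $t$ (equal to $n_2+1$ when the unit norm onto $\thegrouptwo{U_{\Qp[2]}}$ is surjective). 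Fix a basis of unit classes. It then suffices to produce finitely many totally positive $\beta_i\in L^*$, each supported at a single completely split prime and satisfying $\normLQ(\beta_i)\in(\ratn^*)^2$, whose images $\res_2(\beta_i)$ span the target.

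For the construction I would take a rational prime $p$ splitting completely in the Galois closure $M$, a prime element $\rho_1$ of $\OM$ over $p$ with conjugates $\rho_2=\sig\rho_1$ and $\rho_3=\sig^2\rho_1$, and set $\al_j=\Norm_{M/L}(\rho_j)$. The candidate building blocks are $\al_1\al_2=\Norm_{M/L}(\rho_1\,\sig\rho_1)$ and $\al_2\al_3$: by Lemma~\ref{lem:action} each has $\normLQ\in(\ratn^*)^2$, and each is supported only over $p$, hence is a unit at every prime over $2$, so $\res_2(\al_i\al_j)$ automatically lands in the unit subgroup. Because $\rho_1\,\sig\rho_1$ is a unit at each even prime $\primep$ of $\OM$, the extended symbol $\legendretwo{\al_1\al_2}{\primeP}$ at an even prime $\primeP$ of $L$ is computed, via the norm-compatibility of the extended Legendre symbol and the $_\mu$ bookkeeping of its definition, as a fixed $\zz/2\zz$-linear combination of the symbols $\legendretwo{\rho_1}{\primep}$, $\primep\mid 2$. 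Lemma~\ref{lem:choose-rho}, through the ray class description in Remark~\ref{rem:choose}, lets me choose $\rho_1$ realizing these even symbols, and simultaneously the symbols at the real places of $M$; the latter freedom arranges total positivity of $\beta$, exactly as in the real-place discussion of Remark~\ref{rem:free-var}. Note this is a strictly more flexible use of Lemma~\ref{lem:choose-rho} than the display (\ref{eq:choose-rho}), where the even symbols were pinned to $1$.

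The main obstacle is the \textbf{surjectivity} onto $\HonE(\Qp[2],E[2])_0$. Concretely, I must show that as the even symbols $\legendretwo{\rho_1}{\primep}$ range over all admissible values, and as $p$ ranges over finitely many completely split primes, the resulting classes $\res_2(\al_1\al_2)$ and $\res_2(\al_2\al_3)$ span the whole unit part. This reduces to computing the $\zz/2\zz$-rank of the linear map sending the even-place symbols of $\rho_1$ to the unit class $\res_2(\al_i\al_j)$. The computation is genuinely subtle because $2$ need not split in $M$: the norm from $M$ to $L$ over the even primes mixes the local components and the $_\mu$-twisted symbols, so the map must be tracked prime by prime rather than read off from a clean analogue of Remark~\ref{rem:free-var}. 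I expect this rank bookkeeping, rather than the existence of the primes, to be the heart of the proof.

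Finally, I would verify that the remaining constraints do not shrink the image below the target: the square-norm condition holds identically for the blocks $\al_1\al_2,\al_2\al_3$ by Lemma~\ref{lem:action}, so it imposes nothing extra; the total-positivity condition is decoupled from the even symbols because it is controlled by the independent real-place symbols; and the quotient by $\OM^*$ inside the ray class group must be checked not to obstruct realizing the needed even symbols. Once the rank is shown to be full, selecting one completely split prime together with its prescribed symbols for each basis vector of $\HonE(\Qp[2],E[2])_0$ yields the desired $\beta_1,\dots,\beta_t$, completing the proof.
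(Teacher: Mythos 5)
Your strategy coincides with the paper's: take a prime $p$ splitting completely in the Galois closure $M$, form $\al_1\al_2$ and $\al_2\al_3$ from the conjugates of a prime element $\rho_1$, and use the ray class group of a modulus supported at $2\infty$ (Lemma \ref{lem:choose-rho}, Remark \ref{rem:choose}) to prescribe the symbols $\legendretwo{\rho_1}{\primeQ}$ at the even primes of $M$ while keeping $\rho_1$ totally positive. But the step you explicitly defer --- the ``rank bookkeeping'' --- is the entire content of the paper's proof, and your framing of it as a computation ``tracked prime by prime'' breaks down precisely where it is hardest. When $n_2>1$ the paper does carry out the computation you postpone: using Lemma \ref{lem:action} it expresses $\legendretwo{\al_1\al_2}{\primeP_1}$ and $\legendretwo{\al_1\al_2}{\primeP_2}$ as products of the free symbols of $\rho_1$, sets two of those symbols equal to $1$, and observes that the two remaining expressions involve disjoint sets of free variables, so any pair of values is attained. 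That is a finite check, and your outline would be complete for this case once you perform it.

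The genuine gap is the case $n_2=1$, where there is a single prime $\primeP$ of $L$ over $2$ and the target $V=\ker\big(\Norm:\thegrouptwo{U_\primeP}\to\thegrouptwo{U_{\Qp[2]}}\big)$ is a two-dimensional subspace of one local unit group rather than a product of components indexed by primes over $2$. Here the Galois group acts nontrivially on $U_\primeP$ itself, and $\legendretwo{\al_1\al_2}{\primeP}$ is the image of the single free class $\legendretwo{\rho_1}{\primeQ}$ under an operator such as $1+\sig$ (inert case) or $1+\tau+\sig+\tau\sig$ (totally ramified case). Surjectivity onto $V$ is then equivalent to the invertibility of this operator on the corresponding subspace $V'$, which the paper establishes by first showing that $\sig$ acts nontrivially on $V$ (if $\sig\al=\al x^2$ for every class in $V$, the norm relation forces $\al$ to be a square) and then using $\sig+\sig^2=-1=1$ on $V'$. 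The totally ramified case has the further wrinkle that $\thegrouptwo{U_\primeP}\to\thegrouptwo{U_\primeQ}$ has a one-dimensional kernel generated by the class of $-3$ (a primitive cube root of unity lies in $U_\primeQ$), so one must also check that $V$ still injects into $\thegrouptwo{U_\primeQ}$. None of this is visible from your component-wise picture, and without it the surjectivity claim remains unproved exactly where the proposition is least obvious.
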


\begin{proof}
Suppose that $n_2>1$, and that $L/\ratn$ is not Galois.
Using Lemma \ref{lem:action}, we have
\begin{enumerate}
\item\label{case1} if $2\OM=\primeP_1\primeP_2\primeP_3$, then
		\begin{align}
		\legendretwo{\al_1\al_2}{\primeP_1}
			&= \rhoQtwo11{}{\tau}\rhoQtwo32\sig{\tau\sig},\notag\\
		 \legendretwo{\al_1\al_2}{\primeP_2} 
		 	&= \rhoQtwo22{}{\tau}\rhoQtwo13{\sig}{\tau\sig},\notag
 \end{align}
\item\label{case2} if $2\OM=\primeP_1^2\primeP_2$ or $\primeP_1\primeP_2$, then 
 \begin{equation*}
 \legendretwo{\al_1\al_2}{\primeP_1} 
   =\begin{cases}
   \legendretwo{\rhoone}{\tau\primeQ_1}_{\tau} 
       \legendretwo{\rhoone}{\primeQ_3}_{\sig},\\
	 \rhoQtwo11{}{\tau} \legendretwo{\rhoone}{\primeQ_3}_{\sig}\legendretwo{\rhoone}{\primeQ_2}.       
	 \end{cases}
 \end{equation*}
 \end{enumerate}	
 
Note that $(\OM/\primeP^n)^* \to \thegrouptwo{U_\primeP}$ is surjective for  $n$ sufficiently large.
Consider the modulus $\idealM=2^n\infty\OM$, and the isomorphism
  		$$ \rayclass{M}{\idealM} 
  		 \cong \bigoplus_{\primeP \mid \idealM} \rayclass{M}{\primeP^{ne}}
  		 \oplus\ \bigoplus_{v \mid \infty} M_v/ M_{v,1}.$$
Suppose that $2$ splits completely in $L$. 
As mentioned earlier in Remark \ref{rem:free-var}, all $\legendretwo{\rhoone}{\primeQ_i}$ and $\legendretwo{\rhoone}{\tau\primeQ_i}$ can be treated as free variables.  Moreover, $\rhoone$ can be chosen to be a totally positive prime element lying over a prime number with $n_p=3$. So, recall the case \ref{case1} above, and let
		$$\legendretwo{\rhoone}{\primeQ_1}=\legendretwo{\rhoone}{\tau\primeQ_2}=1.$$
Then, 		
\begin{align*}
		\legendretwo{\al_1\al_2}{\primeP_1}
			&= \legendretwo{\rhoone}{\tau\primeQ_1}_\tau
			    \legendretwo{\rhoone}{\primeQ_3}_\sig\\
		 \legendretwo{\al_1\al_2}{\primeP_2} 
		 	&= \legendretwo{\rhoone}{\primeQ_2}
			    \legendretwo{\rhoone}{\tau\primeQ_3}_{\tau\sig},
 \end{align*}
and it is clear that we can impose any pair of  values on $\legendretwo{\al_1\al_2}{\primeP_1}$ and 
$\legendretwo{\al_1\al_2}{\primeP_2}$.  
Recall that $\al_1\al_2$ represents an element in $\HonE(\ratn,E[2])$, and also recall the description (\ref{eq:actual-map}) and (\ref{eq:H1-identification}).  Then, it can hit an arbitrary element in $\HonE(\Qp[2],E[2])_0$ by choosing values of the Legendre symbols on $\rho_1$.
Thus, we can find the $\beta_i$'s claimed in the theorem.  The proof of Case \ref{case2} is similar.

Suppose that $L/\ratn$ is Galois, and that $2\OM$ splits completely. 
In this case, we have only three free variables as shown in Lemma \ref{lem:Galois}, and we have
\begin{align*}
	\legendretwo{\al_1\al_2}{\primeP_1}
			&=\albe11\albe13_{\sig}\\
	\legendretwo{\al_1\al_2}{\primeP_2}		
		&=\albe12\albe11_{\sig}.
\end{align*}		
So, let $\albe11=1$. Then, it is clear that any pair of values can be imposed on $\legendretwo{\al_1\al_2}{\primeP_1}$ and $\legendretwo{\al_1\al_2}{\primeP_2}$,
which proves the result for the case of Galois extensions.
\par
Suppose that $n_2=1$.
Note that there are only two extensions of $\Qp[2]$ of degree $3$, namely, 
$\fieldL_1:=\Qtwo[x]/(x^3-2)$ and $\fieldL_2:=\Qtwo[x]/(x^3-x+1)$. The local field $\fieldL_1$ corresponds to the case where $2$ is totally ramified in $L$, and $\fieldL_2$, the case where $2$ is inert in $L$.
Consider the case $2\OL=\primeP$.  Regardless of whether or not $L/\ratn$ is Galois, the completion $L_\primeP$ is Galois over $\Qtwo$, and it is $\fieldL_2$. Let $\primeP\OM=\primeQ\cdot\tau\primeQ$ be the factorization of $\primeP$ in $M$ if $L/\ratn$ is not Galois.  Then, an automorphism $\sig \in \Gal(M/\ratn)$ of order $3$ 
acts trivially on $\set{\primeQ,\tau\primeQ}$, and this automorphism can be considered as one in $\Gal(\fieldL_2/\Qtwo)$.  Note that $\sig\in\Gal(\fieldL_2/\Qtwo)$ acts on $\UBtwo\cong\UBtwo[\primeQ]$ while it was not the case for $n_p>1$.  Since $n_2=1$, the kernel of the map
		$ \Norm_{\fieldL_2/\Qtwo} : \thegrouptwo{\fieldL_2} \to \thegrouptwo{\Qtwo}$
is the kernel of 	
		$$ \Norm_{\fieldL_2/\Qtwo} : \UBtwo \To \UBtwo[2]$$
which we denote by $V$, and $\dim V = 2$.  We claim that $\sig$ acts nontrivially on $V$ and hence, nontrivially on the corresponding subgroup of $\UBtwo[\primeQ]$, which we denote by $V'$.
Suppose that $\sig$ acts trivially on $V$ so that for each $\al \in U_\primeP$ representing an element of $V$, there is some $x \in U_\primeP$ such that $\sig \al = \al x^2$.
Then since $[\al] \in V$, there is $y \in U_2$ such that $y^2=\al\cdot\sig\al \cdot \sig^2 \al
			=\al \cdot \al x^2 \cdot \al x^2 \sig(x)^2$, and hence,
			$\al = \big( y (\al x^2 \sig(x))\Inv \big)^2 \in U_\primeP^2$.  This proves the claim.
Note that , \small
\begin{equation*}
		\legendretwo{\al_1\al_2}{\primeP}=
		\begin{cases}
			 \legendretwo{\rhoone}{\primeQ}
			     \legendretwo{\rhoone}{\tau\primeQ}_\tau
			      \legendretwo{\rhoone}{\primeQ}_\sig
			       \legendretwo{\rhoone}{\tau\primeQ}_{\tau\sig} & 
			       \text{if $L/\ratn$ is not Galois}\\
		  \legendretwo{\al_1}{\primeP}
		 			     \legendretwo{\al_1}{\primeP}_\sig
			       & \text{if $L/\ratn$ is  Galois.}
			    \end{cases}
 \end{equation*}\normalsize
 Recall that we can treat $\legendretwo{\rhoone}{\primeQ}$, $\legendretwo{\rhoone}{\tau\primeQ}$, and  $\legendretwo{\al_1}{\primeP}$ as free variables on $\UBtwo[\primeQ]$ or on $\UBtwo[\primeP]$.
 If we write these vector spaces over $\finitefield[2]$ additively, the tranformation $(1+\sig)$ is invertible on $V'$ since $\sig(1+\sig)=\sig +\sig^2=-1=1$. This means that each of $\legendretwo{\rhoone}{\primeQ} \legendretwo{\rhoone}{\primeQ}_\sig$ and $\legendretwo{\rhoone}{\tau\primeQ}\legendretwo{\rhoone}{\tau\primeQ}_{\sig}$ can assume any value in $V'$, and so does  $\legendretwo{\al_1}{\primeP}\legendretwo{\al_1}{\primeP}_\sig$  in $V$.  Thus, using the Cebotarev density theorem as in (\ref{eq:choose-rho}), we obtain the result.
 \par
 Consider the case $2\OL=\primeP^3$. As in the previous case, for $\HonE(\Qtwo,E[2])$, we only need to consider the kernel, denoted also by $V$, of 
 		$$ \Norm_{\fieldL_1/\Qtwo} : \UBtwo \To \UBtwo[2],$$
and $\dim V = 2$.  Note that $\primeP\OM=\primeQ$, and that the natural map $\UBtwo \to \UBtwo[\primeQ]$ is not injective.  It has kernel of dimension $1$, and it is the subgroup of $\UBtwo$ generated by the class represented by $-3$ as $U_\primeQ$ has a primitive third root of unity.  Since  $\UBtwo=V\oplus\inner{[-3],[3]}$, it implies that $V$ injects into $\UBtwo[\primeQ]$; we denote the image of $V$ by $V'$.
Let $\curlyM$ be the completion of $M$ at $\primeQ$.
Let $\Gal(\curlyM/\Qtwo)$ be generated by $\sig$ of order $3$ and $\tau$ which generates $\Gal(\curlyM/\fieldL_1)$.  Note that $\Gal(\curlyM/\Qtwo)$ acts on $\UBtwo[\primeQ]$.  
As in the previous case, involved in computing $\legendretwo{\al_1\al_2}{\primeP}$ is the transformation $1 + \tau + \sig + \tau\sig$ on $\UBtwo[\primeQ]$.
Note that $1 + \tau + \sig + \tau\sig$ restricted to $V'$ is $1+1+\sig +\sig^2=\sig+\sig^2$ since $\tau$ acts trivially on $V'$.  As in the previous case, it turns out that  $\sig$ acts nontrivially on each of the three nontrival elements of $V'$ and, hence,  $\sig+\sig^2=-1=1$ on $V'$.  Therefore, if we choose a totally positive prime element $\rho_1$ of $\OM$ such that $\legendretwo{\rho_1}{\primeQ}$ is any value $z$ in $V'\subset\UBtwo[\primeQ]$, then
$$\legendretwo{\al_1\al_2}{\primeQ}=
		\legendretwo{\rhoone}{\primeQ}
					     \legendretwo{\rhoone}{\primeQ}_\tau
					      \legendretwo{\rhoone}{\primeQ}_\sig
			       \legendretwo{\rhoone}{\primeQ}_{\tau\sig}
			 =z.$$
Since $V$ is naturally isomorphic to $V'$ and $[\al_1\al_2]\in V$, it means that $\legendretwo{\al_1\al_2}{\primeP}$ can be any value in $V\subset\UBtwo[\primeP]$.\hfill\qed
 \end{proof}

\begin{corollary}\label{cor:surjection}
There is a set $S$ consisting of $\infty$, $2$, and primes $p$ with $n_p=3$ such that 
$\HonE(\ratn,E[2])_S$ surjects down to $\HonE(\Qp[2],E[2])$ under the restriction map.
\end{corollary}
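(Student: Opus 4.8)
The plan is to combine the surjectivity onto the unit part supplied by the preceding proposition with a class field theoretic construction that covers the remaining \lq\lq uniformizer\rq\rq\ directions at $2$. First I would record the relevant dimensions. Writing the local group as in (\ref{eq:H1-identification}), one has $\dim\HonE(\Qp[2],E[2])=2n_2$, while the unit subgroup $\HonE(\Qp[2],E[2])_0$ of the proposition is the kernel of the norm restricted to the unit tuples $\bigoplus_{\primeP\mid 2}\thegrouptwo{U_\primeP}$; since the latter has dimension $\sum_{\primeP\mid 2}(1+e_\primeP f_\primeP)=n_2+3$ and the norm surjects onto the $2$-dimensional unit part of $\thegrouptwo{\Qp[2]}$, we get $\dim\HonE(\Qp[2],E[2])_0=n_2+1$. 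Hence the quotient $\HonE(\Qp[2],E[2])/\HonE(\Qp[2],E[2])_0$, which records the valuation vector $(v_\primeP(\al)\bmod 2:\primeP\mid 2)$, has dimension $n_2-1$. In particular, when $n_2=1$ the proposition already gives the corollary, so I may assume $n_2\ge 2$, and it remains only to realize a basis of this $(n_2-1)$-dimensional valuation part by restrictions of global classes.

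To realize one such direction, fix a vector $(a_\primeP)_{\primeP\mid 2}$ over $\zz/2\zz$ with $\sum_{\primeP\mid 2}f(\primeP/2)\,a_\primeP$ even; this parity is forced by the norm condition and holds for every vector in the valuation part. I would produce $\gamma\in L^*$, supported on $S$, with $v_\primeP(\gamma)\equiv a_\primeP\pmod 2$ for $\primeP\mid 2$, even valuation outside $S$, and $\normLQ(\gamma)\in(\ratn^*)^2$; then $[\gamma]\in\HonE(\ratn,E[2])_S$ by (\ref{eq:Sel-def}), and $\res_2[\gamma]$ hits the prescribed direction modulo $\HonE(\Qp[2],E[2])_0$. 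The ideal $\mathfrak{c}=\prod_{\primeP\mid 2}\primeP^{a_\primeP}$ need not be principal, so I would adjoin to $S$ a prime $p$ with $n_p=3$ whose factorization $p\OL=\mathfrak{q}_1\mathfrak{q}_2\mathfrak{q}_3$ has a factor lying in the class $-[\mathfrak{c}]$ of $\Cl(L)/2\Cl(L)$. Since $[\mathfrak{q}_1]+[\mathfrak{q}_2]+[\mathfrak{q}_3]=0$, the product $\mathfrak{q}_1\mathfrak{q}_2$ then lies in $[\mathfrak{c}]$, so $\mathfrak{c}\,\mathfrak{q}_1\mathfrak{q}_2$ is a square in $\Cl(L)/2\Cl(L)$ and equals $(\gamma)\,\mathfrak{b}^2$ for suitable $\gamma\in L^*$ and an ideal $\mathfrak{b}$. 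Replacing $\gamma$ by $\pm\gamma$ arranges $\normLQ(\gamma)>0$; as this norm then has only even valuations it is a square, and the valuation vector of $\gamma$ over $2$ is exactly $(a_\primeP)$. Carrying this out for a basis of the valuation part yields classes $\gamma_1,\dots,\gamma_{n_2-1}$, and together with the $\beta_i$ of the proposition they span a subgroup of $\HonE(\ratn,E[2])_S$ surjecting onto all of $\HonE(\Qp[2],E[2])$, where $S$ consists of $\infty$, $2$, and the finitely many split primes appearing above.

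The \textbf{main obstacle} is exactly the existence, for a prescribed class $c\in\Cl(L)/2\Cl(L)$, of a prime $p$ with $n_p=3$ possessing a factor in $L$ of class $c$: I must simultaneously impose that $p$ split completely in $L$ (equivalently in $M$) and that a chosen factor carry the class $c$. This is precisely the type of statement furnished by Lemma \ref{lem:choose-rho} and Remark \ref{rem:choose} through the Cebotarev density theorem applied in an appropriate ray/Hilbert class field. The only delicate point is the possible intersection of $M$ with the relevant $2$-class field of $L$, which I would handle by passing to the Galois closure over $\ratn$ and selecting the Frobenius class so that it restricts trivially to $M$ while inducing $c$ on the class field. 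Granting this, the construction above goes through and the corollary follows.
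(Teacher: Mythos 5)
Your route is the paper's own: use the Proposition to cover the unit subgroup $\HonE(\Qp[2],E[2])_0$, and then manufacture classes in $\HonE(\ratn,E[2])_S$ whose valuation vectors at the primes above $2$ span the remaining quotient, by attaching to each prescribed vector an auxiliary completely split prime whose factors absorb the ideal-class obstruction. Your dimension count ($\dim\HonE(\Qp[2],E[2])_0=n_2+1$, so the quotient has dimension $n_2-1$ and is identified with the parity-constrained valuation vectors), and your construction of $\gamma$ with $(\gamma)=\mathfrak{c}\,\mathfrak{q}_1\mathfrak{q}_2\cdot(\text{square})$, $\normLQ(\gamma)\in(\ratn^*)^2$ after a sign adjustment, are correct and in fact more systematic than the paper, which writes down the two elements $x,y$ explicitly in the case $2\OL=\primeP_1\primeP_2\primeP_3$ and leaves the other splitting types to the reader.

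The step you defer, however, is a genuine gap as written, and it is precisely the point where the paper's proof has content. Lemma \ref{lem:choose-rho} does not furnish what you need: it produces a prime element of $\OM$ in a prescribed \emph{ray class of $M$}, whereas you need a prime ideal of $\OL$, lying over a rational prime that splits completely in $M$, whose class in $\Cl(L)$ (or $\Cl(L)/2\Cl(L)$) is prescribed. Your proposed fix --- pick a Frobenius in the compositum of $M$ with the Hilbert class field $H$ of $L$ that restricts trivially to $M$ and to the class $c$ on $H$ --- only works if the restriction $\Gal(HM/M)\to\Gal(H/L)$ is surjective, i.e.\ if $H\cap M=L$; were $M\subset H$, the factors of completely split primes would all lie in an index-$2$ subgroup of $\Cl(L)$ and your target class could be unreachable. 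The missing ingredient is the observation that when $L/\ratn$ is not Galois the quadratic extension $M/L$ is always ramified at a finite prime (the conductor--discriminant formula gives $\abs{\Delta_M}=\abs{\Delta_K}\,\abs{\Delta_L}^2$, so the relative discriminant of $M/L$ has norm the discriminant of $K$, which is not a unit), whence $H$ and $M$ are linearly disjoint over $L$, $\Gal(HM/M)\cong\Gal(H/L)$, and Cebotarev in $HM/\ratn$ does produce a completely split $p$ with $p\OL=\primep_1\primep_2\primep_3$ and $[\primep_1]$ any prescribed class. With that inserted (the Galois case $M=L$ being trivial here), your argument closes.
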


\begin{proof}
Let $\primeP$ be a prime ideal of $\OL$.  
If $L/\ratn$ is not Galois, then $M/L$ is ramified and, hence, the Hilbert class field $H$ of $L$ and $M/L$ are linearly disjoint.
It follows that $\Gal(HM/M) \cong \Gal(H/L)$ which is true for the case $M=L$, as well.
Then, by choosing a Frobenius automorphism in $\Gal(HM/M)$, we can find a prime ideal $\primep_1$ of $\OL$ representing the class of $\primeP$ in $\Cl(\OL)$ such that the prime number $p:=\primep_1\cap\zz$ splits completely in $L$. From $p\OL=\primep_1\primep_2\primep_3$, it follows that $\primeP\primep_2\primep_3$ is a principal ideal.

Suppose that $2\OL=\primeP_1\primeP_2\primeP_3$.  Then, there are prime numbers $p$ and $p'$ splitting completely in $L$, i.e.,
$p\OL=\primep_1\primep_2\primep_3$ and $p'\OL=\primep_1'\primep_2'\primep_3'$, such that 
$\primeP_1\primep_2\primep_3$ and $\primeP_2\primep_2'\primep_3'$ are generated by $\al$ and $\beta$, respectively.
Note that $\normLQ(\al\beta)=\pm(2pp')^2$. So, either of $x:=\pm \al\beta$ represents an element of $\HonE(\ratn,E[2])$ since $[L:\ratn]=3$.  Then, it is clear that we can find an element $y$ representing an element in $\HonE(\ratn,E[2])$ such that $y\OL=\primeP_2\primeP_3\idealA$ where $\idealA$ is square-free and supported only by prime ideals lying over primes splitting completely.   The two elements $\res_2(x)$ and $\res_2(y)$ together with $\HonE(\Qp[2],E[2])_0$ generate $\HonE(\Qp[2],E[2])$, and by the previous proposition, we prove the result.  The proof for the other cases of splitting of $2\OL$ is similar.\hfill\qed
\end{proof}

\section{Proof}\label{sec:proof}
We prove Theorem \ref{maintheorem} in this section.  Let $E$ and $L$ be as in Section \ref{sec:local-cob}.
From now on, when $q$ is an odd prime number, we say that $\Img \delta_q$ is \textit{unramified} if $\Img \delta_q$ is contained in $\HonE(\Qp[q],E[2])\unr = L_q(\emptyset,2)$, and \textit{totally ramified} if $\Img \delta_q$ contains no nontrivial elements of $L_q(\emptyset,2)$. For example, if $n_p=3$, then a subgroup $\inner{(pa,b),(c,pd)}$ in $\HonE(\Qp,E[2])$ is totally ramified where $a,b,c,d$ are unit integers, while $\inner{(pa,b),(pc,d)}$ is not totally ramified. 
Let $S$ be the set of $2$, $\infty$, and places $q$ of bad reduction of $E/\ratn$ such that $q$ is ramified in $L$ with $n_q=2$. This set $S$ depends only on $L$.   In this section, our elliptic curve is replaced by its twists several times, but the set $S$ is not replaced, and \textit{fixed to be the one just described here, once and for all}.
Let ${T_E}$ be the set of places consisting of $\infty$, $2$, and places $p$ of bad reduction of $E$ with $n_p>1$, except for the odd primes $q \in S$ such that $\Img \delta_q$ is unramified.

First, we shall argue below that there is a quadratic twist  $E'$ satisfying the following properties:
\begin{properTy}\label{property-E} \hspace{1em}
\begin{enumerate}
\item $E'$ has good reduction at odd places $p$ with $n_p=2$ which are unramified in $L$, and $\Img \delta_p$ is totally ramified at odd places $p$ of bad reduction with $n_p=3$;
\item $\Cl(L)[2]$ is generated by prime ideals over primes  $p$ with $n_p=3$ at which $E'$ has bad reduction;
\item $\SeL(E')$ intersects $\HonE(\ratn,E'[2])_S$ trivially;
\item As we apply the local conditions to $\HonE(\ratn,E'[2])_{T_{E'}}$ over $\infty$, $2$,  and all odd primes $q\in S$, 
	the dimension drops by $\ep$ at $\infty$, and by $1$ at odd primes $q \in S$ if $\Img \delta_q$ is ramified.
\item $\HonE(\ratn,E'[2])_{T_{E'}}$ surjects onto $\HonE(\Qp[2],E'[2])$.	
\end{enumerate}
\end{properTy}

Let us first show that if $p$ is an odd prime number unramified in $L$ such that $n_p=2$ then either $E$ or the quadratic twist $E_p$ has good reduction at $p$.
Note that a twist $E_D$ can be given by $y^2=x^3+aD^2x + bD^3$.
Twisting by $p$, if necessary, we may assume that $x^3+ax+b=((x-z_3))((x-z_3) - (z_1-z_3))((x-z_3) - (z_2-z_3))$ where $z_3\in\Qp$ such that $\ord_\primeP(z_1-z_3)=\ord_\primeP(z_2-z_3)$ is even where $p\OL = \primeP\primeP'$ and $f(\primeP/p)=2$.  Then, changing the variables over $\Qp$, we find a $\zz_p$-model 
$y^2=x ( x - \al ) ( x - \beta)$ where $(x-\al)(x-\beta)$ is an irreducible polynomial over $\Qp$, and $\al$ and $\beta$ are unit integers.  Since $n_p=2$, the quadratic polynomial remains irreducible over $\finitefield$ and, hence, it has good reduction at $p$.
 If $n_p=3$, then either $E$ or the twist $E_D$ with $D=p$ has a totally ramified local coboundary image at $p$.  This can be easily shown by considering the cases of the triples of parities of $\ord_p(z_1-z_2)$, $\ord_p(z_2-z_3)$, and $\ord_p(z_3-z_1)$ and using Lemma \ref{lem:themap}.  So, let us assume without loss of generality Property \ref{property-E}, \#1.
\par
Note that we may as well assume that $\Cl(L)[2]$ is generated by prime ideals $\primeP$'s dividing primes  $p$ with $n_p=3$ at which $E$ has bad reduction.  We always have such primes $p$ as argued in the proof of Corollary \ref{cor:surjection}, and after twisting $E$ with the product of these primes $p$, it has Property \ref{property-E}, \#2.
\par
Let $x$ be an element of $\OL$ representing a nontrivial element in $\HonE(\ratn,E[2])$.  Then, $L(\sqrt{x})$ is linearly disjoint with $M$ over $L$ since $M=L(\sqrt{\Delta})$ with $\Delta \in \ratn$.   Using  the Cebotarev density theorem and  the nontrivial automorphism in $\Gal(LM(\sqrt{x})/M)$, we can find a prime $p$ splitting completely in $L$, i.e., $p\OL=\primeP_1\primeP_2\primeP_3$, such that $\legendretwo{x}{\primeP_1}=-1$.  
Recall that given an elliptic curve $E/\ratn$, we denote by $S_E$ the set of $2$, $\infty$, and places $p$ of bad reduction of $E/\ratn$ with $n_p>1$.   Since $\HonE(\ratn,E[2])_S$ is finite, we can find $k$ distinct prime numbers $p \not \in S_E$ with $n_p=3$, where $k=\Sharp\ \HonE(\ratn,E[2])_S -1$, such that 
each nontrivial element $z\in\HonE(\ratn,E[2])_S$ represented by an element $x$ of $L^*$ is a non-square unit integer in some $L_p$.  Let $D$ be the product of these $k$ primes. Then, by Lemma \ref{lem:themap}, the local coboundary images of $E_D$ at these primes $p$ is totally ramified, and each nontrivial element $z$ of $\HonE(\ratn,E[2])_S$ is mapped to a nontrivial element of $\HonE(\Qp,E_D[2])\unr$ at some of these primes $p$ as illustrated below:
\begin{center}
\includegraphics{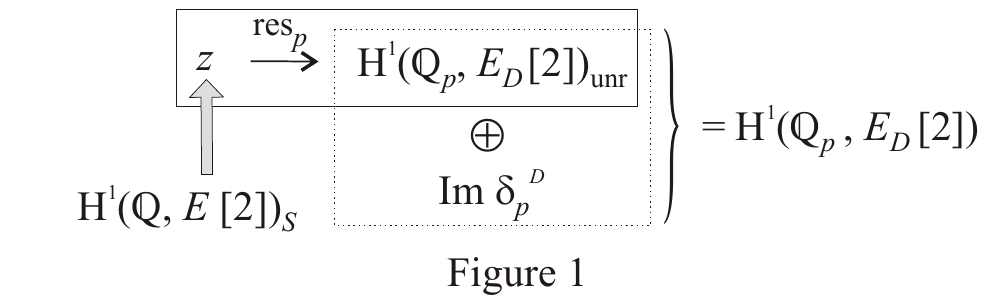}
\end{center}
where $\Img \delta_p^D$ denotes the image of the local coboundary map at $p$ for $E_D$.
Hence, $\SeL(E_D)$ intersects $\HonE(\ratn,E[2])_S$ trivially; see Property \ref{property-E}, \#3.  In particular, $\SeL(E_D)$ does not contain an element represented by  $x \in L^*$ such that $x\OL$ is a square.  Thus, we can assume that if $x\in\OL$ represents an element in $\SeL(E)$, then the square-free part of $x\OL$ is necessarily supported by a prime ideal outside $S$.
\par
Let us recall Lemma \ref{lem:action}, Lemma \ref{lem:Galois}, (\ref{eq:choose-rho}), and Remark \ref{rem:free-var}. Given an odd ramified prime $q$ with $n_q=2$ and $q\OL = \primeP_1^2\primeP_2$, we can find a prime number $p$ such that $\legendretwo{\al_1\al_2}{\primeP_1}=-1$, and $\legendretwo{\al_1\al_2}{\primeP}=1$ for all other places $\primeP$ lying over $2$, $\infty$ or a prime number $r$ of bad reduction of $E$ with $n_r>1$ where $\al_1$ and $\al_2$ are as defined in Lemma \ref{lem:action} and \ref{lem:Galois}.    For each of these primes $q$, let $p_q$ denote the prime number $p$, and we renew our elliptic curve $E$ with the twist $E_D$ where $D$ is the product of the $p_q$'s.  Then, as we apply the local conditions at odd primes $q \in S$ to $\HonE(\ratn,E[2])_{S_E}$, some elements $\al_1\al_2$ of $\HonE(\ratn,E_D[2])$ are mapped to the nontrivial element of $\HonE(\Qp[q],E[2])\unr$ under $\res_q$ as in Figure 1 and, hence, at odd primes $q \in S$,  the dimension drops  by $1$ if $\Img\delta_q$ is (totally) ramified; see (\ref{eq:downstep}) and  Property \ref{property-E}, \#4.  This twisting step possibly changes the local coboundary images everywhere, but satisfied are the properties which are discussed in the previous paragraphs, and stated in Property \ref{property-E} as the first three items. 
Note that Property \ref{property-E}, \#3 remains true since after twisting, $\Img\delta_q^D$ may be different from $\Img\delta_p$, but remains totally ramified.
 If $\Img\delta_q$ is unramified when $q \in S$ is odd, then $\Img \delta_q=\inner{ (a) }$ where $a$ is a  unit integer of $\Qp[q]$, and no elements in $\HonE(\ratn,E[2])$ which are ramified at $q$ will  be mapped to $\Img \delta_q$ under $\res_q$.  This means that, to compute $\SeL(E)$, we may apply the local conditions to $\HonE(\ratn,E[2])_{T_E}$.  Since $\Cl(L)[2]$ is generated by prime ideals over  primes $p$ with $n_p=3$, at which $E$ has bad reduction, by (\ref{eq:dimension-H1}), we have 
	$$\dim\HonE(\ratn,E[2])_{T_E} = \dim \HonE(\ratn,E)_{S_E} 
	       - \Sharp (S_E \minuS T_E).$$
More importantly, since the $\al_i\al_j$ in this paragraph are still contained in $\HonE(\ratn,E)_{T_E}$, the fourth item in Property \ref{property-E} is satisfied for odd primes $q \in S \cap T_E$.
By Remark \ref{rem:free-var}, when $L$ has three real embeddings, we have a version of Lemma \ref{lem:action} and \ref{lem:Galois} for $q=\infty$, and can find a prime number $p_\infty$ such that $\inner{\al_1\al_2} \to \HonE(\real,E[2])$ is surjective.
By Corollary \ref{cor:surjection}, we may assume Property \ref{property-E}, \#5.

\par
We shall show that if ($E$ satisfies Property \ref{property-E}, and) $\SEL(E) \ge 2$, then $\SEL(E_D) < \SEL(E)$ for some $D$.
Suppose that  $\SEL(E) \ge 2$.
Then, by Property \ref{property-E}, \#3, the subgroup $\SeL(E)$ contains two elements represented by $x$ and $y$ in $L^*$ such that the square-free part of each $x\OL$ and $y\OL$ is supported by ideals lying over splitting primes $q_1$ and $q_2$ outside $S$, which are not necessarily distinct.
To compute $\SeL(E)$,  we shall apply to $\HonE(\ratn,E[2])_{T_E}$ the local conditions at $q\in S$ first, and then at the remaining places, in an order such that the local conditions at $q_1$ and $q_2$ are applied at the very end as shown in (\ref{eq:Figure2}).  Recall from Sec \ref{sec:local-cob} the definition of $W_k$'s:
\begin{equation}\label{eq:Figure2}
\xymatrix@R=.9\baselineskip{
	\text{\makebox[2em][r]{$\HonE(\ratn,E[2])_{T_E}$}} &
		W_0 \ar[l]_{\incl} \ar[d]_{\res_\infty} 
			& \cdots\ar[l]_{\incl} 
				& W_{n-1} \ar[l]_{\incl} \ar[d]_{\res_{q_1}}  
					& W_n\text{\makebox[0pt][l]{\ $=\Sel2(E)$.}}
						\ar[l]_{\incl} \ar[d]_{\res_{q_2}}
						& \HS{1.5em}\\
		& \Img \delta_\infty 
			&
				& \Img \delta_{q_1}
					& \Img \delta_{q_2}
					   &\HS{1.5em}
				}
\end{equation}				
	Of course, the value of $\SEL(E)$ does not depend on the order we apply local conditions, but we use this order to make our later argument work.
Note that by Property \ref{property-E}, \#4 and \#5, there must be a place $q^*\in S_E$ with $n_{q^*}=3$ at which the dimension does not drop by $2$  as we apply the local conditions in an  order described above; otherwise, $\SEL(E)$ would be zero.
\begin{lemma}\label{lem:choose}
Suppose that $\SEL(E) \ge 2$. Then, there are $x$ and $y$ in $\Sel2(E)$ and $q_1$ and $q_2$ outside $S$ with $q_1\OL=\primeP_1\primeP_2\primeP_3$ and $q_2\OL=\tilde\primeP_1\tilde\primeP_2\tilde\primeP_3$
such that $q^*=q_2$, or $q^*$ is not equal to $q_1$ or $q_2$, and that
\begin{equation}\label{eq:factorization1}
\begin{aligned}
		x\OL & =\primeP_1\primeP_2\big(\primePtilde_1^{d_1}\primePtilde_2^{d_2}\primePtilde_3^{d_3}\big)
				\idealA_x \idealB_x^2;\\
		y\OL &=\primePtilde_1\primePtilde_2\big(\primeP_1^{e_1}\primeP_2^{e_2}\primeP_3^{e_3}\big)
				\idealA_y \idealB_y^2,
				\end{aligned}\end{equation}
where $\idealA_x$ and $\idealA_y$ are square-free ideals,$d_i$ and $e_i$ are $0$ or $1$, $\sum d_i \equiv \sum e_i \equiv 0 \mod 2$,  $d_1d_2 \ne 1$, and $e_1 e_2 \ne 1$, or
\begin{equation}\label{eq:factorization2}
\begin{aligned}
		x\OL & =\primeP_1\primeP_2
				\idealA_x \idealB_x^2;\\
		y\OL &=\primeP_2\primeP_3
				\idealA_y \idealB_y^2,
				\end{aligned}\end{equation}
where $\idealA_x$ and $\idealA_y$ are square-free ideals supported by $S$.
\end{lemma}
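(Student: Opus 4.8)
The plan is to first pin down the shape of the square-free part of a Selmer representative using the norm condition, and then to repackage a two-dimensional piece of $\Sel2(E)$ into the two standard forms by relabeling primes. I begin by recording the divisibility constraint. If $x\in\OL$ represents a class in $\Sel2(E)$, then $\normLQ(x)\in(\ratn^*)^2$, so for every prime number $q$ the integer $\ord_q\normLQ(x)=\sum_{\primeP\mid q} f(\primeP/q)\,\ord_\primeP(x)$ is even. Specializing to a completely split prime $q$ (all three $f(\primeP/q)=1$) shows that an even number of the three primes above $q$ occur in the square-free part of $x\OL$; since only three are available, this number is $0$ or $2$. At an inert prime the same parity forces $\ord_\primeP(x)$ even, so inert primes never occur. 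By the remark preceding the lemma, $\Sel2(E)$ contains no class represented by $x$ with $x\OL$ a square, so any nontrivial class actually carries the pattern $\primeP_1\primeP_2$ at some split prime in its support.

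Next I would choose two independent classes in $\Sel2(E)$, possible since $\SEL(E)\ge 2$, and divide into cases according to whether their square-free parts away from $S$ can be concentrated above a single completely split prime. If they can, say both are supported only above one split prime $q_1$, then each is a product of exactly two of the three primes above $q_1$. Two independent classes cannot use the same pair, since then their product would have square ideal and hence be trivial in $\Sel2(E)$ by the remark preceding the lemma, contradicting independence; and they cannot use disjoint pairs, since only three primes are available. Thus they share exactly one prime, and relabeling yields $\primeP_1\primeP_2$ and $\primeP_2\primeP_3$, which is (\ref{eq:factorization2}), with $\idealA_x,\idealA_y$ supported by $S$ because the only support outside $S$ is at $q_1$.

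In the remaining case I would take $x$ carrying a split prime $q_1$ and $y$ carrying a distinct split prime $q_2$, label the pairs they use as $\primeP_1\primeP_2$ and $\tilde\primeP_1\tilde\primeP_2$, and encode the cross-support by the exponents $d_i$ (support of $x$ above $q_2$) and $e_i$ (support of $y$ above $q_1$). The parity computation above applied at $q_2$ and at $q_1$ gives $\sum d_i\equiv\sum e_i\equiv 0\pmod 2$, and the freedom still available in labeling the $\tilde\primeP_i$ and the $\primeP_i$ lets me arrange $d_1d_2\ne 1$ and $e_1e_2\ne 1$; any auxiliary support at primes with $n_p=2$ is absorbed into the square-free ideals $\idealA_x,\idealA_y$. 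This produces (\ref{eq:factorization1}). Finally I would position $q^*$: since $q^*$ is a prescribed split prime with sub-maximal dimension drop and I retain the freedom to interchange the roles of $x$ and $y$ and of $q_1$ and $q_2$, I would assign labels so that $q^*$ is either $q_2$ or distinct from both $q_1$ and $q_2$, which is all the statement demands.

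The main obstacle I anticipate is making the case split genuinely exhaustive while keeping the relabeling consistent: I must verify that whenever the two supports cannot be concentrated above one split prime they can nonetheless be separated into the pattern of (\ref{eq:factorization1}) while simultaneously respecting the parities, the constraints $d_1d_2\ne 1$ and $e_1e_2\ne 1$, and the placement of $q^*$—in particular ruling out the degenerate configuration $q^*=q_1$ by reassigning the roles of $x$ and $y$. The bookkeeping that forces the auxiliary $n_p=2$ contributions into $\idealA_x,\idealA_y$ (rather than into the distinguished pairs) is what makes the two cases mutually exclusive and exhaustive, and this is the step I expect to require the most care.
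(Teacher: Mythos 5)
There is a genuine gap in the step that arranges $d_1d_2\ne 1$ and $e_1e_2\ne 1$. You claim that "the freedom still available in labeling the $\primePtilde_i$ and the $\primeP_i$" lets you arrange these inequalities, but these conditions are label-independent: since $\sum e_i\equiv 0\bmod 2$, the equality $e_1e_2=1$ says precisely that the support of $y$ above $q_1$ coincides with the distinguished pair of $x$ above $q_1$, and likewise $d_1d_2=1$ says that the support of $x$ above $q_2$ coincides with the distinguished pair of $y$ there. No permutation of the primes above $q_1$ or $q_2$ can change whether two supports coincide. In the configuration where $x$ and $y$ have identical support above both $q_1$ and $q_2$ (for every admissible choice of $q_1,q_2$), relabeling is powerless; the paper instead observes that this would force the square-free part of $xy\OL$ to be supported by $S$, so that $xy$ is a nontrivial element of $\Sel2(E)\cap\HonE(\ratn,E[2])_S$, contradicting Property \ref{property-E}, \#3. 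In the mixed case (say $d_1d_2=1$ but $e_1e_2\ne 1$) the paper repairs the factorization by replacing the pair $(x,y)$ with $(xy,y)$ and checking the resulting exponents case by case. Your proof needs this change of basis in $\Sel2(E)$; without it the argument fails on the coincident-support configuration.

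A second, smaller gap is the placement of $q^*$. You assert the freedom to interchange the roles of $q_1$ and $q_2$ to avoid $q^*=q_1$, but $q^*$ is defined relative to a chosen order of applying the local conditions, and interchanging that order could a priori relocate the deficient dimension drop. The paper justifies the swap by the monotonicity $\res_{q_2}(W_{n-1})\subset\res_{q_2}(W_{n-2})$: if the drop at $q_2$ is $2$ when $q_2$ is applied last, it remains $2$ when $q_2$ is applied one step earlier, so after swapping, the deficient place is the one now applied last and can be renamed $q_2$. You flag this point as delicate but do not supply the argument. The remainder of your proposal (the parity analysis of the square-free part via the norm condition, and the derivation of the pattern $\primeP_1\primeP_2$, $\primeP_2\primeP_3$ in the single-prime case from independence and Property \ref{property-E}, \#3) matches the paper's reasoning and is sound.
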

\begin{proof}
Let us prove the assertion that $q^*=q_2$, or $q^*$ is not equal to $q_1$ or $q_2$.
Suppose that $q^*=q_1$ and $q_1\ne q_2$.  This means that the dimension drops by $2$ at all places prior to $q_1$ and $q_2$; otherwise, we could choose $q^*$ not equal to $q_1$ or $q_2$.  Note that if $\dim W_{n-1} - \dim W_n < 2$, then
we can choose $q^*=q_2$, and we are done.  So, assume that $\dim W_{n-1} - \dim W_n = 2$, which means
$\res_{q_2}(W_{n-1}) = V \oplus T$ where $V$ is the maximal subgroup contained in $\Img\delta_{q_2}$ and $\dim T = 2$. 
Note that since $W_{n-1} \subset W_{n-2}$, we have 
	$\res_{q_2}(W_{n-1}) \subset \res_{q_2}(W_{n-2})$.
This means that if we apply the local condition at $q_2$ earlier than at $q_1$, then
$V \oplus T \subset \res_{q_2}(W_{n-2})$ and, hence, the dimension of $W_{n-2}$ drops  by $2$ after applying the local condition at $q_2$. Thus, $q^*=q_1$ must be the case.
So, we may assume that the local conditions at $q_1$ are applied first, and $q^*=q_2$.

Suppose that we can choose $q_1\ne q_2$, so we have the factorization in (\ref{eq:factorization1}).
If for all such $q_1$ and $q_2$, we have  $d_1 d_2 = 1$ and $e_1 e_2 = 1$, then we must have 
$xy \in \HonE(\ratn,E[2])_S$ which contradicts Property \ref{property-E}, \#3.
So, we have $d_1 d_2 \ne 1$ or $e_1 e_2 \ne 1$ for some choice of distinct $q_1$ and $q_2$.
Suppose that $d_1 d_2 = 1$ and $e_1 e_2 \ne 1$. Then,
\begin{align*}
		xy\OL & =\primeP_1^{e_1+1}\primeP_2^{e_2+1}\primeP_3^{e_3}
				\idealA_x\idealA_y \idealB_x^2\idealB_y^2;\\
		y\OL &=\primePtilde_1\primePtilde_2\big(\primeP_1^{e_1}\primeP_2^{e_2}\primeP_3^{e_3}\big)
				\idealA_y \idealB_y^2.
				\end{align*}
If $x'=xy$ and $y'=y$, then for all possibilities of $e_1$, $e_2$, and $e_3$, the factorization of $x'$ and $y'$ satisfies the conditions stated for (\ref{eq:factorization1}).

It is clear that if we can not choose distinct $q_1$ and $q_2$, then (\ref{eq:factorization2}) must be the case. 
\hfill\qed
\end{proof}

Our goal here is to find a prime number $p$ with $p\OL = \al_1\al_2\al_3 \OL$ such that Property \ref{property-al} below is satisfied:
\begin{properTy}\label{property-al} \HS{2em}
\begin{enumerate}
\item $\inner{ \al_1\al_2, \al_2\al_3} \to \HonE(\Qp[q^*],E[2])\unr$ is surjective;
\item $\inner{ x, y } \to \HonE(\Qp,E[2])\unr$ is surjective;
\item $\inner{ \al_1\al_2, \al_2\al_3} \to \HonE(\Qp[q],E[2])$ 
	is the trivial map for all other places $q$ in $S_E$.
\end{enumerate}
\end{properTy}
Using this prime $p$, we will  find a twist such that $\SEL(E_D) < \SEL(E)$.

By Lemma \ref{lem:choose}, we have the following possibilites:
(a) $q_1$, $q_2$, and $q^*$ are all distinct; (b) $q_1 \ne q_2 = q^*$; 
(c) $q_1=q_2=q^*$; (d) $q_1=q_2\ne q^*$.
Since the proofs of (a) and (b), and of (c) and (d) are very similar to each other,
we present the proofs of (a) and (c) in this paper.

Suppose that $L/\ratn$ is not Galois.
Let us consider the case that $q_1=q_2=q^*$.  By Lemma \ref{lem:choose}, we have factorization 
(\ref{eq:factorization2}). We claim that there are prime elements $\al_i$ of $\OL$ for $ i=1,2,3$ lying over a prime $p$  such that $\al_i\al_j$'s represent elements in $\HonE(\ratn,E[2])$, and 
\begin{enumerate}
\item \begin{equation}\label{eq:atq-prime}
	\legendretwo{ \al_1 \al_2 }{ \primeP }=1
	\end{equation}
              for all  places $\primeP$ dividing places  contained in $S_E$, not equal to $q^*$;
\item 
	\begin{equation}\label{eq:showering}
	\begin{aligned}
	\legendretwo{ \al_1 \al_2 }{ \primeP_1 } &= -1, \quad& 
			\legendretwo{ \al_2 \al_3 }{ \primeP_1 } &=1,\\
	\legendretwo{ \al_1 \al_2 }{ \primeP_2 } &= 1, \quad& 
			\legendretwo{ \al_2 \al_3 }{ \primeP_2 } &=-1.
	\end{aligned}
	\end{equation}
\end{enumerate}
 By Lemma 
\ref{lem:action},
\begin{align*}
	\legendretwo{ \al_2 \al_3 }{ \primeP_1 }
		&= \albe21 \albe23 \albe11 \albe22;\\
	\legendretwo{ \al_2 \al_3 }{ \primeP_2 }
		&= \albe22 \albe21 \albe12 \albe23.
\end{align*}
Recall  from Remark \ref{rem:free-var} the (only) constraint on the values of Legendre symbols above, and use the Cebotarev density theorem as in (\ref{eq:choose-rho}) to choose a totally positive prime element $\rho_1$ such that $\albe11=-1$, $\albe23=-1$, $\albe{i}{j}=1$  for all other pairs $(i,j)$, and $\legendretwo{ \rho_1 }{ \primeP }=1$
              for all  places $\primeP$ dividing places  contained in $S_E$ not equal to $q^*$, so that (\ref{eq:atq-prime}) and (\ref{eq:showering}) are satisfied. Then, by Lemma \ref{lem:action}, and the reciprocity law, Lemma \ref{lem:reciprocity}, we have
\begin{align}
	\legendretwo{x}{\al_1} &= \legendretwo{x}{\rhoone}  
					    =\prod_{\primeQ \mid \idealM_x} \legendretwo{\rho_1}{\primeQ}
						=\prod_{\primeQ \mid \primeP_1\primeP_2\OM} 
											\legendretwo{\rho_1}{\primeQ}\notag\\
					&=\rhoQ11 \rhoQ22 = \albe11 \albe12\notag\\
					&=-1;\label{eq:x-al1}\\
	\legendretwo{x}{\al_2} &= \legendretwo{x}{\rho_2}  
					  =\prod_{\primeQ \mid \idealM_x} \legendretwo{\rho_2}{\primeQ}
						=\prod_{\primeQ \mid \primeP_1\primeP_2\OM} 
						   						\legendretwo{\rho_2}{\primeQ}\notag\\
					&=\prod_{\primeQ \mid \primeP_1\primeP_2\OM} 
						   		\legendretwo{\rho_1}{\sig^2\primeQ}\notag\\	   						
					&=\rhoQ32 \rhoQ13 = \albe21 \albe 22\notag\\
					&=1.\label{eq:x-al2}
\end{align}
Similarly, we find 
\begin{align}\label{eq:eq-y}
	\legendretwo{y}{\al_1} &=\albe12\albe13=1;\\
	\legendretwo{y}{\al_2} &=\albe22\albe23=-1.\notag
\end{align}
\par
Let us consider the case that $q^*$, $q_1$, and $q_2$ are all distinct.
Let 		$q^*\OL=\primeP_1^*\primeP_2^*\primeP_3^*$ be the factorization into prime ideals.
By Lemma \ref{lem:choose},
we may assume 
\begin{align*}		
		x\OL & =\primeP_1\primeP_2\big(\primePtilde_1^{d_1}\primePtilde_2^{d_2}\primePtilde_3^{d_3}\big)
				\big((\primeP_1^*)^{s_1}(\primeP_2^*)^{s_2}(\primeP_3^*)^{s_3}\big)
				\idealA_x \idealB_x^2;\\
		y\OL &=\primePtilde_1\primePtilde_2\big(\primeP_1^{e_1}\primeP_2^{e_2}\primeP_3^{e_3}\big)
				\big((\primeP_1^*)^{t_1}(\primeP_2^*)^{t_2}(\primeP_3^*)^{t_3}\big)
				\idealA_y \idealB_y^2\\
			&\HS{2.5em}
			\textstyle\text{where }\sum d_i \equiv \sum e_i \equiv \sum s_i\equiv \sum t_i \equiv 0 \mod 2,\\
			&\HS{2.5em}
			 \text{and }d_1 d_2 \ne 1 \text{ and } e_1 e_2 \ne 1.
				\end{align*}
Let $T_{ij}:=\legendretwo{\al_i}{\primeP_j}$, $\Ttilde_{ij}:=\legendretwo{\al_i}{\primePtilde_j}$, and 
			$\Tstar_{ij}:=\legendretwo{\al_i}{\primePstar_j}$.
Then, as in (\ref{eq:x-al1}), (\ref{eq:x-al2}), and (\ref{eq:eq-y}), we have 
\begin{equation}\label{eq:system}
\begin{aligned}\
\legendretwo{x}{\al_1} &= T_{11} T_{12}
				\big(  \Ttilde_{11}^{d_1}\Ttilde_{12}^{d_2}\Ttilde_{13}^{d_3} \big)
					\big(  (\Tstar_{11})^{s_1}(\Tstar_{12})^{s_2}(\Tstar_{13})^{s_3} \big);\\
\legendretwo{x}{\al_2} &=T_{21} T_{22}
				\big(  \Ttilde_{21}^{d_1}\Ttilde_{22}^{d_2}\Ttilde_{23}^{d_3} \big)
					\big(  (\Tstar_{21})^{s_1}(\Tstar_{22})^{s_2}(\Tstar_{23})^{s_3} \big);\\
\legendretwo{y}{\al_1} &=\Ttilde_{11} \Ttilde_{12}
				\big(  T_{11}^{e_1}T_{12}^{e_2}T_{13}^{e_3} \big)
					\big(  (\Tstar_{11})^{t_1}(\Tstar_{12})^{t_2}(\Tstar_{13})^{t_3} \big);\\
\legendretwo{y}{\al_2} &=\Ttilde_{21} \Ttilde_{22}
				\big(  T_{21}^{e_1}T_{22}^{e_2}T_{23}^{e_3} \big)
					\big(  (\Tstar_{21})^{t_1}(\Tstar_{22})^{t_2}(\Tstar_{23})^{t_3} \big).
\end{aligned}
\end{equation}
Recall that (\ref{eq:showering}) describes the map $\inner{\al_1\al_2, \al_2\al_3} \to \HonE(\Qp[q^*],E[2])\unr$, and
it is clear that there are values of  $\Tstar_{ij}$ such that this map is surjective.
Now, with these values, let us treat $\Tstar_{ij}$ as constants in the above system where $T_{ij}$ and $\Ttilde_{ij}$ are variables.
 If all $d_i$ and $e_j$ are zeros, then the above system has rank $4$.
If all $e_i$ are zeros, but some $d_j$ is not, then it is clear that the system has rank $4$.
Say $e_1=0$, and $e_2=e_3=1$, and treat $\Ttilde_{ij}$ as constants. Then, the rank of the system is greater than or equal to  the rank of the vectors
		$$ T_{11}T_{12},\  T_{21}T_{22},\  T_{12}T_{13},\  T_{22}T_{23}$$
where $T_{22}T_{23}=T_{11}T_{12}T_{13}T_{21}$, and the five variables $T_{ij}\ne T_{23}$ are free.
The set of these four vectors has rank $4$.
Similarly, when $e_2=0$ and $e_1=e_3=1$, it turns out that the system has rank $4$.  
This proves that we could choose $p$ such that $\inner{x,y} \to \HonE(\Qp,E[2])\unr$ is surjective.
\par
The case that $L/\ratn$ is Galois can be handled in the similar way.
In fact, we considered the system (\ref{eq:system}) to compute the image of $\inner{x,y}$ under  the restriction map at $p$ in $\finitefield[2] \times \finitefield[2]\cong\HonE(\Qp,E_D[2])\unr$.
For the Galois case, we have only three free variables $T_{11}$, $T_{12}$, and $T_{13}$ (see Lemma \ref{lem:Galois}).
For the case $q_1=q_2=q^*$, as in (\ref{eq:system}), the restriction map at $p$ is given by
		\begin{align*}
		\big( \legendretwo{x}{\al_1}, \legendretwo{x}{\al_2}\big)
				&= \big( T_{11}T_{12}, T_{21}T_{22} \big)\\
	\big( \legendretwo{y}{\al_1}, \legendretwo{y}{\al_2}\big)
				&= \big( T_{12}T_{13}, T_{22}T_{23} \big)			
				\end{align*}
where $T_{21}=T_{13}$, $T_{22}=T_{11}$, and $T_{23}=T_{12}$.
By a similar analysis as in the non-Galois case, it turns out that for all cases, with these three free variables for each prime, the restriction map at $p$ is surjective. 
\par
Suppose we have Property \ref{property-al}.
Recall that we chose $\rho_1$ using (\ref{eq:CRT}) in Section \ref{sec:lemmas}, which
determined the $\al_i$'s above.
Note that using a higher power modulus $\idealM$ in (\ref{eq:CRT}), we can choose $\rho_1$
which yields the $\al_i$'s with the same property given above.
Let $K'=M(\sqrt{q} : q<\infty,\ q\in S)$. Then, using a modulus $\idealM$ of high exponents, we may assume that $K'$ is contained in the ray class field $R_\idealM$ of $M$ mod $\idealM$.
Recall that we choose $\rho_1$ in $\rayclass{M}{\idealM}$ such that $\rho_1 \equiv^* 1 \mod \idealM'$
where $\idealM'$ is the \lq\lq greatest divisor\rq\rq\ of $\idealM$ supported only by $S$ 
as in (\ref{eq:atq-prime}).
As $p$ denotes the rational prime over which $\rho_1$ lies, $\Frob(\rho_1)$ in $\Gal(R_\idealM/M)$ restricts trivially to $K'/M$, i.e., $p$ splits completely in $K'$, which means that $\legendretwo{q}{p}=1$ for all finite primes $q\in S$.  Since $2\in S$, we have $p\equiv 1 \mod 8$ and, hence, $\legendretwo{p}{q}=1$ for \textit{all} finite primes $q\in S$.
Then, by Lemma \ref{key-lemma}, there is an inert prime $D_0$ such that 
for  all finite primes $r \in S_E$, we have $\legendretwo{p}{r}=\legendretwo{D_0}{r}$, i.e., 
$\legendretwo{pD_0}{r}=1$.  Now let $D=pD_0$.  Then, at each $r$ contained in $S_E$, the local coboundary image of $E_D$ is equal to that of $E$ at $r$ (see \cite[Proposition 2.5]{chang-thesis:2004}).  
Let  $\TED$ be $T_E\cup\set{p}$.
Then, $\dim \HonE(\ratn, E[2])_{T_E} + 2 = \dim \HonE(\ratn, E_D[2])_{\TED}$, and the following diagram illustrates the computation of $\SeL(E_D)$:
\begin{equation}\label{diag:Sel-ED}
\xymatrix@C=50pt{
	\HonE(\ratn,E[2])_{T_E} \ar[r]^{\incl}\ar[d]^{\Psi_\infty^{E}} 
				& \HonE(\ratn,E_D[2])_{\TED}\ar[d]^{\Psi_\infty^{E_D}}\\
	 \vdots \ar[d]^{\Psi_{q}^{E}} 
	          & \vdots \ar[d]^{\Psi_{q}^{E_D}} \\
	 \SeL(E)   \ar[r]^{\incl}\ar[d]^{\Psi_p^{E_D}}  
	 		& W_n^{E_D}\ar[d]^{\Psi_p^{E_D}}  \\
	  V\ar[r]^{\incl} & \SeL(E_D)}
\end{equation}	  
where the maps $\Psi_v^{E_D}$ and $\Psi_v^E$ denote the process of applying the local condition at $v$, and $n$ is  a positive integer.
Note that if $v\ne p$ is contained in $\TED$, then $\Img\Psi_v^{E}\subset\Img\Psi_v^{E_D}$ since 
$\Img\delta^D_v = \Img\delta_v$, and that  (\ref{eq:showering}) makes the dimension drop by $2$ at $q^*$.
Since $\Img\Psi_v^{E}\subset\Img\Psi_v^{E_D}$, at each step, the dimension drops by more or the same dimension for $\HonE(\ratn,E_D[2])_{\TED}$ than for $\HonE(\ratn,E[2])_{T_E}$.
Especially, at $q^*$, it necessarily drops by more dimension for $\HonE(\ratn,E_D[2])_{\TED}$.
Recall (\ref{eq:dim-drop}) from Section \ref{sec:local-cob}.
Then, if $h_D$ and $h$  denote the sums of dimension-drops over $T_E$ for $\HonE(\ratn,E_D[2])_{\TED}$ and $\HonE(\ratn,E[2])_{T_E}$, respectively, then we have
\begin{align*}
\dim W_n^{E_D} &= \dim \HonE(\ratn,E_D[2])_{\TED} - h_D\\
	&=\dim\HonE(\ratn,E[2])_{T_E} + 2 - h_D \\
	&<\dim\HonE(\ratn,E[2])_{T_E} + 2 - h =\SEL(E)+2.
\end{align*}	
Thus, $\dim W_n^{E_D} \le \SEL(E) +1$.
Since $\Img\delta_p^D$ is totally ramified and $\inner{x,y}$ maps onto $\HonE(\Qp,E_D[2])\unr$,  the dimension drops by $2$ at $p$.
Therefore, $\SEL(E_D)=\dim W_n^{E_D} - 2 \le \SEL(E) - 1$ and, hence, 
$\SEL(E_D) < \SEL(E)$.  This concludes the proof of the existence of such $D$.

\par
When $q^*$ is not equal to $q_1$ or $q_2$, the order of applying the local conditions, which was specified  earlier (see (\ref{eq:Figure2})), matters for our argument.   By (\ref{eq:atq-prime}), the elements $\al_i\al_j$ satisfy all the local conditions of $E_D$ and $E$ at all places in $S_E\minuS\set{q^*,q_1,q_2}$, and as a result, the dimension  drops by $2$ at $q^*$ as the local condition at $q^*$ precedes the ones at $q_1$ or $q_2$ (see (\ref{eq:Figure2})).
In order to have the desired values of $\legendretwo{x}{\al_i}$ and $\legendretwo{y}{\al_j}$ as in (\ref{eq:x-al1}) and (\ref{eq:x-al2}), we require nontrivial values of some $\legendretwo{\al_i}{\primeP}$ for some $\primeP$ dividing $q_1$ or $q_2$. Thus, if the local condition at $q_1$ or $q_2$ preceded the one at $q^*$, some $\al_i\al_j$ might not survive the local condition at $q_1$ or $q_2$, and the dimension may not drop by $2$ at $q^*$.

Let us add a few words on using unramified primes with $n_p=2$.  The advantage of using this prime is that $\dim \HonE(\ratn,E)_{S_E} + 1 = \dim \HonE(\ratn,E_D)_{S_D}$ where $D=p$, and that  our technique yields a nontrivial map
$\Sel2(E) \to \HonE(\Qp,E[2])\unr$ especially if $\SEL(E)=1$.  This would result in $\SEL(E_D)=0$.
First of all, if $L/\ratn$ is Galois, then we do not have such a prime available.  Suppose that $L/\ratn$ is not Galois.
Our technique is basically to throw all the local conditions into a one ray class field $R$ of $L$, and use the Cebotarev density theorem and the law of quadratic reciprocity to choose $p$ and figure out what happens at $p$.  The following lemma summarizes the difficulty of choosing such $p$ with $n_p=2$:  Recall that $M$ is the Galois closure of $L$, and suppose that $\inner{\tau}=\Gal(M/L)$ and $\inner{\sig,\tau}=\Gal(M/\ratn)$.
\begin{lemma}
Let $R$ be a finite abelian extension of $L$, and $\al$ an automorphism in $\Gal(R/L)$. 
Let $\primeQ$ represent a prime ideal in $R$, and $\primeP$, a prime ideal in $L$.
Then, 
	there is $\Frob(\primeQ/\primeP)$ in $\Gal(R/L)$ such that $\Frob(\primeQ/\primeP)=\al$ and $f(\primeP/p)=2$
	if and only if there are a Galois extension $F/\ratn$ containing $RM$ and an automorphism $\mu$ in $\Gal(F/\ratn)$ such that 
	$\res_M(\mu)$ is $\tau\sig$ or $\tau\sig^2$ and $\res_R(\mu^2)=\al$.
	\end{lemma}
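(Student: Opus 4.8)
The plan is to reduce the whole equivalence to a single classical identity for Frobenius elements in a tower, together with an orbit count in $S_3=\Gal(M/\ratn)$. Recall that $\Gal(M/\ratn)=\inner{\sig,\tau}$ with $\sig$ of order $3$, $\tau$ of order $2$, and $\Gal(M/L)=\inner{\tau}$. First I would pass to a large Galois field: let $F$ be a Galois extension of $\ratn$ containing $RM$ (for instance the Galois closure of $RM$), and fix a prime $\primeQ_F$ of $\OK[F]$ lying over $\primeQ$, hence over $\primeP$ and over the rational prime $p$; throughout I restrict to $p$ unramified in $F$, which is harmless since only such primes are produced or used. Write $\mu:=\Frob(\primeQ_F/p)\in\Gal(F/\ratn)$ and $\bar\mu:=\res_M(\mu)$.

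The arithmetic input I would use is that the Frobenius over $\primeP$ is the Frobenius over $p$ raised to the residue degree, namely $\Frob(\primeQ_F/\primeP)=\mu^{f(\primeP/p)}$, because the residue field of $\primeP$ has $p^{f(\primeP/p)}$ elements. Since $R/L$ is abelian, hence Galois, and $F/L$ is Galois, restriction to $R$ gives
\[
\Frob(\primeQ/\primeP)=\res_R\big(\Frob(\primeQ_F/\primeP)\big)=\res_R\big(\mu^{f(\primeP/p)}\big).
\]
The second ingredient is a group-theoretic formula for the residue degree of the prime $\primeP=\primeQ_F\cap L$ directly below $\primeQ_F$. Comparing the decomposition groups of $\primeQ_F$ in $M/\ratn$ and in $M/L$ yields $f(\primeP/p)=|\inner{\bar\mu}|\big/|\inner{\bar\mu}\cap\inner{\tau}|$. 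Evaluating over the conjugacy classes of $S_3$: $\bar\mu=1$ or $\bar\mu=\tau$ give $f(\primeP/p)=1$, $\bar\mu=\sig^{\pm1}$ gives $3$, and the two remaining order-$2$ elements $\tau\sig,\tau\sig^2$ give $2$. Hence $f(\primeP/p)=2$ \emph{if and only if} $\bar\mu\in\set{\tau\sig,\tau\sig^2}$; note in particular that $\bar\mu=\tau$ yields $f=1$ for the prime below $\primeQ_F$, which is exactly why these two elements, and not $\tau$ itself, appear in the statement.

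With these two observations the equivalence is immediate. For the forward direction, given $\primeP$, $\primeQ$, $p$ with $f(\primeP/p)=2$ and $\Frob(\primeQ/\primeP)=\al$, the field $F$ and the element $\mu$ constructed above satisfy $\res_M(\mu)\in\set{\tau\sig,\tau\sig^2}$ by the orbit count, and $\res_R(\mu^2)=\res_R(\mu^{f(\primeP/p)})=\Frob(\primeQ/\primeP)=\al$ by the displayed identity. For the converse, given a Galois $F\supseteq RM$ and $\mu\in\Gal(F/\ratn)$ with $\res_M(\mu)\in\set{\tau\sig,\tau\sig^2}$ and $\res_R(\mu^2)=\al$, the Cebotarev density theorem furnishes a prime $\primeQ_F$ of $\OK[F]$ (over an unramified $p$) with $\Frob(\primeQ_F/p)=\mu$; setting $\primeP:=\primeQ_F\cap L$ and $\primeQ:=\primeQ_F\cap R$, the orbit count gives $f(\primeP/p)=2$ and the displayed identity gives $\Frob(\primeQ/\primeP)=\res_R(\mu^2)=\al$.

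The routine steps are the Frobenius-restriction identity and the appeal to Cebotarev; the one place demanding care is the residue-degree computation, where one must track that it is the prime \emph{directly below the chosen} $\primeQ_F$ whose degree is governed by $|\inner{\bar\mu}|\big/|\inner{\bar\mu}\cap\inner{\tau}|$, and hence that the transposition $\tau\in\Gal(M/L)$ produces $f=1$ rather than $f=2$. I therefore expect the main obstacle to be the bookkeeping of this orbit count and the chasing of Frobenius through the tower $L\subseteq R\subseteq F$, in which $R/\ratn$ need not be Galois, together with the routine verification that one may restrict attention to primes unramified in $F$.
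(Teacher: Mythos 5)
The paper states this lemma without proof---it appears only as a remark explaining why primes with $n_p=2$ are hard to exploit---so there is no argument of the author's to compare yours against; judged on its own, your proof is essentially correct and is surely the intended one. The two pillars, namely the identity $\Frob(\primeQ_F/\primeP)=\Frob(\primeQ_F/p)^{f(\primeP/p)}$ together with its compatibility with restriction to the abelian (hence Galois) extension $R/L$, and the count $f(\primeP/p)=|\inner{\bar\mu}|\big/|\inner{\bar\mu}\cap\inner{\tau}|$ for the prime of $L$ directly below the chosen $\primeQ_F$, are both right, and your evaluation over the elements of $S_3$ correctly isolates $\set{\tau\sig,\tau\sig^2}$ while excluding $\tau$ itself (whose prime directly below has $f=1$). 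The backward direction via the Cebotarev density theorem is clean, since Cebotarev supplies primes unramified in $F$.

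The one loose end is in the forward direction. You say you ``restrict to $p$ unramified in $F$, which is harmless since only such primes are produced or used,'' but in the forward direction the prime is given to you, not produced, and it may ramify in the Galois closure $F$ of $RM$: the hypotheses only guarantee that $p$ is unramified in $L$ and hence in $M$ (because $e(\primeP/p)f(\primeP/p)\le 3$ with $f(\primeP/p)=2$ forces all ramification indices to be $1$) and that $\primeP$ is unramified in $R$ (needed for $\Frob(\primeQ/\primeP)$ to exist). The gap is fixable: these two facts show that the inertia group $I(\primeQ_F/p)$ maps trivially to both $\Gal(M/\ratn)$ and $\Gal(R/L)$, hence lies in $\Gal(F/M)\cap\Gal(F/R)=\Gal(F/RM)$; any representative $\mu$ of the Frobenius coset then has well-defined restriction to $M$, and the computation $(\mu i)^2=\mu^2\cdot(\mu^{-1}i\mu)\,i$, with both factors of the correction term lying in the normal subgroup $\Gal(F/R)$ of $\Gal(F/L)$, shows that $\res_R(\mu^2)$ is likewise independent of the representative and equals $\Frob(\primeQ/\primeP)=\al$. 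You should also record explicitly the small point you use implicitly throughout: $\res_R(\mu^2)$ lands in $\Gal(R/L)$ because $\res_M(\mu)$ has order $2$, so $\mu^2$ fixes $M\supseteq L$.
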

It seems to us that $\res_R(\mu^2)=\al$ is a very difficult condition to meet!  

Let us prove the corollary \ref{maincorollary}.
By Theorem \ref{maintheorem}, there is a quadratic twist of $E_{D_0}$ for which $\SEL(E_{D_0})$ is $0$ or $1$ and $D_0$ is odd.  Suppose that $\SEL(E_{D_0})=0$.
Using \cite{monsky:1996}, the parity conjecture of elliptic curves over $\ratn$ can be proved under the assumption on the finiteness of the Tate-Shafarevich group.
Since we construct a $D_0$  as a product of large odd primes, the quadratic twist $E_{D_0}$ has multiplicative  reduction at $v$.
Let $E':=E_{D_0}$, and let $\mathcal{H}_{Q}(n)$ be the Dirichlet character associated with the quadratic extension $\ratn(\sqrt{Q})$.  Recall that the root number $w(E'_D)$ is equal to $w(E') \cdot \mathcal{H}_D(-N_{E'})$ if the conductors $N_E$ and $N_{\mathcal{H}_D}$ are coprime to each other, where 
$N_{\mathcal{H}_D} = D$ if $D \equiv 1 \mod 4$. Recall that if $D\equiv 1 \mod 4$, then $\mathcal{H}_D(-N_{E'})=\mathcal{H}_D(N_{E'})$.  

Note that if $n_v=1$, then $E'$ must have good or additive reduction at $v$.
So, we consider only the two cases: $n_v=2$ or $3$. Suppose that $n_v=2$ or $3$ where $v$ is unramified in $L$.
Use Lemma \ref{key-lemma} to choose  a prime number $r$ with $n_r=1$, $\legendretwo{r}{v}=-1$, and $\legendretwo{r}{q}=1$ for all other primes $q$ of bad reduction of $E'$.
Since $\SEL(E')=0$, after we apply the local conditions of all $q\ne v$, the dimension of the following subgroup is $(n_v-1)$:
\begin{equation}\label{eq:last-subgroup}
\curlyC(E',v):=\set{ \al \in \HonE(\ratn,E'[2]) : \al \in \Img\delta_w,\ \text{for all } w \ne v }
\end{equation}
where $\delta_w$ are the local coboundary maps for $E'$.
Let $D=r$.  Then, $\curlyC(E'_D,v) = \curlyC(E',v)$ and, hence, $\SEL(E'_D)\le (n_v-1)$.
Note that since $E'$ has multiplicative reduction at $v$,  $\mathcal{H}_D(-N_{E'}) = -1$.  It follows that $w(E'_D)=-1$, and by \cite{monsky:1996}, $\rank(E'_D(\ratn))$ is odd which means that it is $1$. On the other hand, the finiteness of the Tate-Shafarevich group implies the non-degeneracy of the Casslels-Tate paring from which it follows that $\dim\sha(E'_D)[2]\ne 1$.
Thus, $\SEL(E'_D)=1$.

Suppose that $n_v=2$ and $v$ is ramified in $L$.
Consider the field extension $F=L(\sqrt{-1},\sqrt{q} : q \mid 6S' )$ where $S'$ is the product of all places of bad reduction of $E'$.  Let $\Frob(\primeP/p)$ be a Frobenius automorphism of $F/\ratn$ which trivially restricts to $L(\sqrt{-1})$ and $L(\sqrt{q})$ for all $q\ne v$, but nontrivially to $L(\sqrt{v})/L$. So, $\Frob(\primeP/p)$ nontrivially restricts to $M/L$ and, hence, $n_p=2$. Moreover, via the quadratic reciprocity, this means that $\legendretwo{p}{v}=-1$ and $\legendretwo{p}{q}=1$ for all $q \ne v$.  Let $D=p$. Then, $D \equiv 1 \mod 4$ and, hence, $\mathcal{H}_D(-N_{E'})=\mathcal{H}_D(N_{E'})=-1$.
Since  $\dim\curlyC(E',v) = 1$, $\dim\curlyC(E'_D,p) \le 1 + (n_p-1) = 2$ and, hence, $\SEL(E'_D) \le 2$. As  in the previous cases,$\SEL(E'_D)=1$ follows from  $w(E'_D)=-1$.  The result of the corollary follows from \cite[Theorem 1.2]{chang-thesis:2004}.

\renewcommand{\baselinestretch}{0.9}
\small
 \bibliography{smallselmer9_arxiv}
 \bibliographystyle{amsplain}

\end{document}